\numberwithin{equation}{section}
\theoremstyle{plain}
\newtheorem{Th}{Theorem}[section]
\newtheorem{Lemma}[Th]{Lemma}
\theoremstyle{definition}
\newtheorem{Def}[Th]{Definition}
\newtheorem{Conj}[Th]{Conjecture}
\renewcommand{\S}{{\mathcal{S}}}
\newcommand{\m}{\mathcal}
\newcommand{\trivup}{\left\lfloor\frac{n}{k}\right\rfloor}
\newcommand{\NN}{n}
\def\vec#1{\widehat{#1}}
\def\silent#1\par{}
\def\bbF{{\mathbf F}}
\def\cB{{\mathcal{B}}}
\def\cA{{\mathcal{A}}}
\def\cL{{\mathcal{L}}}
\def\cP{{\mathcal{P}}}
\def\cS{{\mathcal{S}}}
\newcommand{\mainsectionstyle}{%
  \renewcommand{\@secnumfont}{\bfseries}
  \renewcommand\section{\@startsection{section}{1}%
	\z@{.7\linespacing\@plus\linespacing}{.5\linespacing}%
	{\normalfont\large\scshape\centering\bfseries}}
}
\def\paragraph{\@startsection{paragraph}{4}%
  \z@\z@{-\fontdimen2\font}%
  {\normalfont\bfseries}}
\newcommand*{\svgperiod}{%
  \leavevmode
  \tikz[baseline=0pt, x=1pt, y=1pt, scale=1em/1000]\fill
    svg {
      M192 53c0 -29 -24 -53 -53 -53s-53 24 -53 53s24 53 53 53s53 -24 53 -53z
    }
    (current bounding box.west) ++(-86, 0) 
    (current bounding box.east) ++(85, 0) 
  ;%
}
\begin{document}
\mainsectionstyle
\title{Clique number of xor-powers of Kneser graphs}
\author{
Zoltán Füredi
\and
András Imolay
\and
Ádám Schweitzer}
\thanks{
Research
of the first author was supported in part by the National Research Development and Innovation Office, NKFIH,
grants K--132696 and KKP 133819.}

 \subjclass[2020]{Primary: 05D05. Secondary: 05C69, 05C76}

 \keywords{extremal set theory, intersecting families, xor-product}

\begin{abstract}
Let $f_\ell(\NN, k)$ denote the clique number of the xor-product of $\ell$ isomorphic Kneser graphs $KG(\NN,k)$.
Alon and Lubetzky investigated the case of complete graphs as a coding theory problem and showed $f_\ell(\NN,1)\leq \ell \NN +1$.
Imolay, Kocsis, and Schweitzer proved that $f_2(\NN,k)\leq \trivup+c(k)$.

Here, the order of magnitude of $c(k)$ is determined to be $\Theta\left( k \binom{2k}{k}\right)$.
By explicit constructions and by an algebraic proof, it is shown that  $\ell\NN- 2\ell-1 \leq f_\ell(\NN,1)\leq \ell\NN-\ell+1$ (for all $\NN\geq 1$ and $\ell\geq 3$).
Finally, 
 it is proved that the order of magnitude of $f$ lies between $\Omega\left(\NN^{\left\lfloor \log_2(\ell+1)\right\rfloor}\right)$ and $O\left(\NN^{\left\lfloor \frac{\ell+1}{2}  \right\rfloor} \right)$  (as $\ell$, $k$ are given and  $\NN\to \infty$).

We conjecture that the lower bound gives the correct exponent.
\end{abstract}

\maketitle

\section{Introduction}

\subsection{Kneser graphs}
A Kneser graph $G:= KG(A,k)$ has a {\em base} set $A$, the vertex set of $G$ consists of all subsets of $k$ elements of $A$. We denote this as $V(G):= \binom{A}{k}$, and a pair $\{X,Y\}$ forms an edge of $G$ when $X\cap Y=\emptyset$. We also use $KG(n,k)$ for a Kneser graph with an $n$-element base set.
A complete subgraph in the Kneser graph corresponds to a family of mutually disjoint $k$-element sets in the base set $A$. So, the size of the largest clique
 $\omega(KG(\NN,k))=\lfloor \NN/k\rfloor$.

The parameters of Kneser graphs are widely studied in combinatorics. Lovász~\cite{lovasz1978kneser} determined the chromatic number of Kneser graphs and Erdős, Ko and Rado~\cite{zbMATH03162924} determined their independence number.
Brešar and Valencia-Pabon~\cite{BRESAR20191017} examined the independence number of Kneser graphs of different graph products.
In this article, we study the clique number of the xor-products.

\subsection{The xor-product}
Given two graphs $G=(V(G), E(G))$ and $H=(V(H), E(H))$, their {\em xor-product} $G \cdot H$ is a graph with the vertex set $V(G) \times V(H)$ and two vertices $(g,h)$ and $(g',h')$ are connected in $G \cdot H$ if and only if either $gg' \in E(G)$ and $hh' \not\in E(H)$ or $gg' \not\in E(G)$ and $hh' \in E(H)$.
The xor-product is not as well understood as other graph products, but there are a number of highly non-trivial results about it, e.g., by Alon and Lubetzky~\cite{xorproduct} and Thomason~\cite{Thomason1997GraphPA}.
They were also motivated to compare it to the Shannon capacity of graphs, see Alon and Lubetzky~\cite{AL2}, Lov\'asz~\cite{Lovasz_Shannon}.
Let $f_\ell(\NN, k)$ denote the clique number of the xor-product of $\ell$ isomorphic Kneser graphs $KG(\NN,k)$.

Taking a clique $C\subset V(G)$ and a vertex $b\in V(H)$ the set $C\times \{ b\}$ forms a clique in $G\cdot H$ so we obtain $\omega (G\cdot H)\geq \max \{ \omega(G), \omega(H)\}$.
Hence
\begin{equation*}\label{eq111}
\omega\left(KG(\NN,k)\cdot KG(\NN,k) \right) \geq \left\lfloor \NN/k \right\rfloor.
 \end{equation*}
Imolay, Kocsis, and Schweitzer~\cite{imolay2021clique} showed that the function $f_2(\NN,k)- \left\lfloor \NN/k \right\rfloor$ is bounded
 for any given $k$.
Define
$$ c(k):=  \sup_{\NN\to \infty} \left\{f_2(\NN,k)-\trivup\right\}.$$
One of the objectives of this article is to determine the order of magnitude of $c(k)$ as $k \to \infty$.

\begin{Th} \label{maintheorem}
For all $k \geq 1$ and $\NN\geq \frac{1}{2}\left(\binom{2k}{k}-2\right) k^2$,
\begin{equation}\label{eq11}
  f_2(\NN,k) \geq \trivup+\binom{2k}{k}\frac{k}{2}-k.
\end{equation}
On the other hand, as $k\to \infty$ we have
\begin{equation}\label{eq12}
  c(k) \leq (1+o(1))k\binom{2k}{k}.
\end{equation}
\end{Th}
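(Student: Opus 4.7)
I would handle the two inequalities separately, both by analyzing what it means for a family $\{(A_i,B_i)\}$ to form a clique in $KG(n,k)\cdot KG(n,k)$, namely that for every $i\neq j$ exactly one of $A_i\cap A_j=\emptyset$ or $B_i\cap B_j=\emptyset$ holds.

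For the lower bound \eqref{eq11}, my plan is an explicit construction with two layers. Fix a $2k$-element set $T\subset[n]$; the background layer consists of roughly $\lfloor n/k\rfloor$ pairs $(A_i,B_*)$ with $A_1,A_2,\ldots$ a partition of $[n]\setminus T$ into disjoint $k$-sets and $B_*$ a fixed $k$-set---these are pairwise xor-compatible because the $A_i$ are disjoint while the second coordinates coincide. The core layer consists of roughly $\binom{2k}{k}k/2$ ``decorated'' pairs with first and second coordinates living mostly inside $T$; the natural index set is the $\binom{2k}{k}/2$ unordered complementary pairs of $k$-subsets of $T$, each expanded to about $k$ pairs by shifting individual elements between the two coordinates using pivots drawn from $[n]\setminus T$ (a naive construction that keeps both coordinates strictly inside $T$ collapses, because complementary pairs force both intersections to vanish simultaneously). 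The assumption $n\geq \frac{1}{2}(\binom{2k}{k}-2)k^2$ is needed precisely to accommodate all pivots and all background sets disjointly, and after specifying the construction the verification splits into three cases (background vs.\ background, background vs.\ core, core vs.\ core), each reducing to a direct check.

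For the upper bound \eqref{eq12}, I would let $\mathcal{F}$ be a maximum clique and apply a Deza/sunflower-type argument to the first coordinates: pass to a large subfamily $\mathcal{F}_0\subseteq \mathcal{F}$ on which the $A_i$ form a sunflower with a common kernel $K$. If $K=\emptyset$ then the $A_i$ are pairwise disjoint and $|\mathcal{F}_0|\leq\lfloor n/k\rfloor$ trivially; if $K\neq\emptyset$ then the xor-condition forces the corresponding $B_i$ in $\mathcal{F}_0$ to be pairwise disjoint, which again yields $|\mathcal{F}_0|\leq\lfloor n/k\rfloor$. The pairs in $\mathcal{F}\setminus\mathcal{F}_0$ are then exceptional, and each such $(A,B)$ has a highly constrained intersection pattern with $K$ and with a long disjoint chain inside $\mathcal{F}_0$; a Bollob\'as-type set-pair inequality (or a direct cross-intersection count) should bound the number of admissible patterns by $(1+o(1))k\binom{2k}{k}$, the factor $\binom{2k}{k}$ corresponding to the ways of splitting the $2k$ ``active'' coordinates of an exceptional pair into $A$- and $B$-parts, and the extra factor of about $k$ to the position of the shifted pivot.

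The main obstacle, and the reason this argument is delicate, is precisely the asymptotic sharpness on the upper bound side: producing the leading constant $1+o(1)$ rather than a weaker $O(1)$ requires controlling the sunflower extraction with essentially no loss, for which I expect to need either an iterative refinement of the sunflower or a direct double-counting/entropy argument on the intersection pattern of each exceptional pair with the structured core. The lower bound, by contrast, is fundamentally a design problem: once the correct layered ansatz is in place, the xor-verification is routine given the parameter count already performed.
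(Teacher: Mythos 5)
Your outline diverges from the paper in both directions, and in each case there is a concrete gap.

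\textbf{Lower bound.} Your ansatz (a fixed $2k$-set, complementary pairs, a ``background'' of disjoint $k$-sets with a fixed partner) resembles the paper's construction, but the key mechanism is different and, as stated, not clearly workable. The paper places the $2k$-set $K$ in the \emph{first} base set $A$, indexes its complementary pairs $\{H_i,G_i\}$, and places in the \emph{second} base set $B$ pairwise disjoint $k\times k$ lattices $L_2,\dots,L_m$ plus disjoint $k$-sets $F_1,\dots,F_d$. Each $H_i$ is paired with the $k$ rows of $L_i$ and each $G_i$ with the $k$ columns, so that when the $A$-parts are disjoint (i.e.\ one is $H_i$ and the other $G_i$) the $B$-parts are a row and a column of the same lattice, hence intersect; and in every other case the $B$-parts are disjoint. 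This row/column trick is the whole point of the construction. Your proposal instead speaks of ``shifting individual elements between the two coordinates using pivots,'' which does not obviously produce a design with the required property: when the $A$-parts are complementary in $K$ you need \emph{all} the associated $B$-parts to pairwise intersect, and when the $A$-parts are equal you need the $B$-parts to be pairwise disjoint. You correctly diagnose that a naive all-inside-$T$ family collapses, but you have not specified a replacement that meets both constraints simultaneously, and the count $\binom{2k}{k}k/2$ is asserted rather than derived.

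\textbf{Upper bound.} Here the route is genuinely different from the paper's, and I do not think it can be pushed to the stated constant. A sunflower-lemma extraction only yields a sunflower of size roughly $(|\mathcal{F}|/k!)^{1/k}$, which is nowhere near ``a large subfamily $\mathcal{F}_0$'' of a clique that itself has size only $\lfloor n/k\rfloor+O(k\binom{2k}{k})$; there is no way to absorb the $k!$-type loss and still isolate a core worth the full $\lfloor n/k\rfloor$. The paper avoids sunflowers entirely. After a preliminary reduction (Lemma 2.3: if $|\mathcal{S}|>2k^3$ then the degrees on one side are all $\leq k$), it iteratively peels off stars $\mathcal{Z}_i$ at maximum-degree vertices of $B$ together with the sets $\mathcal{M}_i$ that meet them in $B$; the residual family has pairwise disjoint $B$-parts and size $\leq\lfloor n/k\rfloor$, and the excess is $\leq k\sum_i d_i(d_i-1)$. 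The essential point is then that the $A$-parts $\mathcal{A}_i:=\{Z\cap A: Z\in\mathcal{Z}_i\}$ form a \emph{cross-intersecting matching}, and the separate Lemma 2.2 shows $\sum_i d_i(d_i-1)\leq(1+\gamma(k))\binom{2k}{k}$ by a probabilistic argument on random orderings of $A$ (each ordering has at most one ``type'' $i$, and $\Pr[\text{type }i]\gtrsim d_i(d_i-1)/\binom{2k}{k}$ by inclusion--exclusion). That weighted Bollob\'as-type inequality is precisely the $(1+o(1))$-sharp input you say you are missing; invoking an off-the-shelf Bollob\'as set-pair bound gives only $t\leq\frac12\binom{2k}{k}$, not the needed $\sum d_i(d_i-1)\leq(1+o(1))\binom{2k}{k}$. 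So the gap is not a matter of ``controlling the sunflower extraction''; the sunflower framework itself is the wrong tool, and the random-ordering lemma is the missing idea.
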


The proof of Theorem~\ref{maintheorem} is presented in Section~\ref{twosets}.
It might be easier to determine $f_2(\NN,k)$ for large $\NN$.
In~\cite{imolay2021clique} it was proved that $f_2(\NN,2)=\lfloor{\NN/2}\rfloor +4$ for sufficiently large $\NN$.
Let us define
  $$c_\infty(k):=\limsup_{\NN\to \infty} \left\{f(\NN,k)-\trivup\right\}.$$
We have $c_\infty(1)=c(1)=0$, $c_\infty(2)=4$, and in general the true order of magnitudes
\begin{equation*}\label{eq122}
    \binom{2k}{k}\frac{k}{2}-k \leq c_\infty(k) \leq c(k)\leq (1+o(1))k\binom{2k}{k}.
\end{equation*}
We conjecture that here the equality holds for $ c_\infty(k)$ for all $k\geq 1$ and the best construction is the one from Section~\ref{ss_lowerboundconstruction}.

\begin{restatable}
{Conj}{fosejtes} \label{conj_exact}
For all $k$, if $\NN$ is large enough then
$$f_2(\NN,k)=\trivup+\binom{2k}{k}\frac{k}{2}-k. $$
\end{restatable}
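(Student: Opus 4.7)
Since Theorem~\ref{maintheorem} already supplies the lower bound, my plan is to prove the matching upper bound $f_2(n,k) \leq \trivup + \binom{2k}{k}\frac{k}{2} - k$ for $n \geq n_0(k)$. Given an optimal clique $\mathcal{C} = \{(A_i,B_i) : i \in [m]\}$ in $KG(n,k) \cdot KG(n,k)$, I would reformulate the xor-condition as follows: the disjointness graphs $H_A, H_B$ on $[m]$ defined by $ij \in E(H_A) \Leftrightarrow A_i \cap A_j = \emptyset$ and $ij \in E(H_B) \Leftrightarrow B_i \cap B_j = \emptyset$ form an edge-partition of $K_m$. Both graphs are induced subgraphs of Kneser graphs on an $n$-element set, so each has clique number at most $\trivup$.

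The first step is to establish a \emph{large trivial core}: for $n$ large enough, I would show there exists $I \subset [m]$ with $|I| \geq \trivup - O_k(1)$ such that the sets $\{A_i : i \in I\}$ are pairwise disjoint (after possibly swapping the roles of the two coordinates). This stability-type statement should follow from an iterated $\Delta$-system (sunflower) extraction on the $A$-coordinate combined with a Ramsey-type pigeonhole on the partition $(H_A, H_B)$, refining the $k=2$ argument of~\cite{imolay2021clique}. With $I$ fixed, each remaining $j \in J := [m] \setminus I$ has $A_j$ intersecting several of the $A_i$ with $i \in I$, which forces $B_j$ to be disjoint from the corresponding $B_i$. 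A second $\Delta$-system argument on the exceptional pairs, together with the strong coupling to $I$, should localize them to a ground set of only $O_k(1)$ non-kernel elements---essentially a $2k$-element set.

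The final step is a sharp extremal analysis on this bounded ground set, showing the maximum contribution is exactly $\binom{2k}{k}\frac{k}{2} - k$. The main obstacle will be this matching constant: one must either find an algebraic argument (for example, an eigenvalue or dimension bound on a matrix indexed by the $k$-subsets of a $2k$-set, in the spirit of~\cite{xorproduct}) or carry out a delicate combinatorial case analysis tracking how pairs confined to the kernel interact with ``straddling'' pairs that partially overlap the trivial core. The appearance of $\binom{2k}{k}$---the number of $k$-subsets of a $2k$-set---strongly suggests that an algebraic identity underlies the sharp bound, but translating the coupled structure on $(H_A, H_B)$ into such an identity, and verifying that no external pair can improve over the construction of Section~\ref{ss_lowerboundconstruction}, is the principal difficulty of the conjecture.
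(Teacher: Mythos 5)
The statement you are trying to prove is an open conjecture; the paper offers no proof of it, so there is no ``paper proof'' against which to compare. What the paper actually proves (Theorem~\ref{maintheorem}, inequality~\eqref{eq12}, via Section~\ref{sec_upper} and Lemma~\ref{cross-int}) is the weaker bound
\[
f_2(\NN,k) \;\leq\; \trivup + (1+\gamma(k))\,k\binom{2k}{k},
\]
which is roughly a factor of $2$ above the conjectured value $\trivup+\frac{k}{2}\binom{2k}{k}-k$. Your proposal correctly identifies that Theorem~\ref{maintheorem} already supplies the construction, and the high-level shape of your plan---pass to a side with bounded degree, extract a large ``trivial'' subfamily that is pairwise disjoint in one coordinate, and bound the exceptional remainder---is essentially what the paper's Section~\ref{sec_upper} already does (Lemma~\ref{gyenge} gives the bounded-degree side; the greedy peeling process $\m{S}_0\supset\m{S}_1\supset\cdots\supset\m{S}_q$ produces the trivial core $\m{S}_q$; the exceptional part is charged to a cross-intersecting matching). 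So the structural part of your plan is sound but does not improve on what the paper already has.

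The genuine gap is exactly where you yourself stop: the sharp constant. Your proposal does not say where the missing factor of $2$ would come from, and this is not a small finishing step. The paper's accounting loses a factor of $2$ in a structural way: when a vertex $p_i\in B$ of degree $d_i$ is peeled off, the family $\m{M}_i$ of straddling sets is bounded only by $(k-1)d_i(d_i-1)$, and the cross-intersecting-matching lemma then yields $\sum d_i(d_i-1)\leq (1+\gamma(k))\binom{2k}{k}$, giving the leading term $k\binom{2k}{k}$ rather than $\tfrac{k}{2}\binom{2k}{k}$. Closing this gap requires a quantitatively different charging scheme (or a genuinely new algebraic identity), not merely a tidier version of the same decomposition. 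Also, your claim that the exceptional pairs live on ``essentially a $2k$-element set'' is misleading: in the extremal construction of Section~\ref{ss_lowerboundconstruction} they occupy $2k$ elements of $A$ but about $\frac{k^2}{2}\binom{2k}{k}$ elements of $B$, so the ``bounded ground set'' you must analyze sharply is bounded only as a (large) function of $k$; the sharp extremal problem on that ground set is precisely the content of the conjecture and cannot be treated as a routine final step.
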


Maybe more is true and $c_\infty(k) = c(k)$ for all $k$.

\subsection{Multiproducts of the complete graphs} \label{ss_13}


The rest of our article tackles the question of higher powers of Kneser graphs. We investigate $f_\ell(\NN,k)$ the clique number of the $\ell$-th xor-power (the xor-product of $\ell$ isomorphic copies) of the Kneser graph $KG(\NN,k)$.

Even the case $k=1$ is not trivial when $\ell  \geq 3$.
Note that $KG(\NN,1)$ is the complete graph on $\NN$ vertices.
The function $f_\ell(\NN,1)$ was considered by Alon and Lubetzky in~\cite{xorproduct}, they proved an upper bound $f_\ell (\NN,1)\leq \ell \NN+1$.
Here we give tighter bounds.

\begin{Th}\label{th_1}
For all $\ell \geq 3$ and $\NN \geq 1$
$$\ell\NN- 2\ell-1 \leq f_\ell(\NN,1)\leq \ell\NN-\ell+1.$$
\end{Th}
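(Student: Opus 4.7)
My plan is to prove $f_\ell(\NN,1)\le \ell\NN-\ell+1$ via a linear algebra argument over $\F_2$. Associate to each vertex $v=(v_1,\dots,v_\ell)\in[\NN]^\ell$ the characteristic vector $\phi(v)\in\F_2^{\ell\NN}$ whose $(j,s)$-coordinate equals $1$ iff $v_j=s$. Each $\phi(v)$ has exactly one $1$ per block of length $\NN$, so all the $\phi(v)$ lie in an affine subspace of dimension $\ell\NN-\ell$; moreover $\phi(v)\cdot\phi(w)\equiv \ell-d(v,w)\pmod 2$, so for clique members one has $\phi(v_i)\cdot\phi(v_j)\equiv\ell+1$ and $\phi(v_i)\cdot\phi(v_i)\equiv\ell\pmod 2$. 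The key step is to show the translates $\phi(v_i)-\phi(v_1)$, $i\ge 2$, are linearly independent: any nontrivial relation can be rewritten as $\sum_{i\in S}\phi(v_i)=0$ for some subset $S\subseteq\{1,\dots,m\}$ of forced parity, and taking the $\F_2$-inner product with $\phi(v_j)$ for $j\in S$ yields $0\equiv \ell+(|S|-1)(\ell+1)\pmod 2$, which is inconsistent for both parities of $\ell$. Hence $m-1\le \ell\NN-\ell$.

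\textbf{Lower bound.} For the lower bound I would exhibit an explicit clique of size at least $\ell\NN-2\ell-1$. The base case $\ell=3$ can be handled by the cyclic ``twisted cross''
\[
L_1=\{(a,2,1):a\in[\NN]\setminus\{1,3\}\},\ L_2=\{(1,b,3):b\in[\NN]\setminus\{1,2\}\},\ L_3=\{(3,1,c):c\in[\NN]\setminus\{1,3\}\},
\]
in which within-line pairs have distance $1$ and cross-line pairs have distance $3$, giving $3(\NN-2)$ vectors. For general $\ell\ge 4$ I would proceed by induction $\ell-1\to\ell$: extend every element $v$ of the current clique $C\subseteq[\NN]^{\ell-1}$ to $(v,1)\in[\NN]^\ell$ and adjoin the $\NN-2$ vectors $(v^*,j)$ for $j\in[\NN]\setminus\{1,j_0\}$, where $v^*\notin C$ is an auxiliary vertex at even Hamming distance from every $c\in C$ and $j_0$ is a reserved value. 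The distance from $(c,1)$ to $(v^*,j)$ equals $d(c,v^*)+1$, which is odd by construction, and the adjoined fiber has internal distance $1$. Iterating from the base produces at least $\ell(\NN-2)\ge \ell\NN-2\ell-1$ pairwise-odd-distance vectors.

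\textbf{Main obstacle.} The central difficulty is guaranteeing the auxiliary vertex $v^*$ exists at every inductive step; for an arbitrary clique $C$ no such $v^*$ need exist, and the natural candidate $v^*=(1,\dots,1)$ fails after the first extension because the fiber vectors $(1,\dots,1,j)$ adjoined previously lie at odd distance from it. My plan is to thread the induction through an \emph{explicit} family, rotating the distinguished coordinate and choosing $j_0$ so that a vertex of a specific predictable form (essentially a ``center'' of the current clique, suitably translated) has the required even-parity distances. Verifying these invariants persist through all $\ell-3$ iterations, and adapting the cyclic base template so that it meshes with the inductive structure, is the intricate combinatorial core of the proof; the slack of $2$ per step (as opposed to the ideal $1$) is precisely what produces the $-2\ell-1$ error term.
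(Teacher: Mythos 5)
Your upper bound argument is correct and is essentially the paper's proof of Theorem~\ref{th_1_upper}: both use $\F_2$ characteristic vectors and exploit that the mod-2 inner product of two distinct clique vectors is $\ell+1$ while the self-inner-product is $\ell$. Your version works directly with the differences $\phi(v_i)-\phi(v_1)$, which live in the $(\ell\NN-\ell)$-dimensional subspace cut out by the block-sum constraints; the paper instead adjoins the characteristic vectors of the blocks $A_i$ to the family and splits into cases according to the parity of $|\cS|$. These amount to the same computation, and your packaging neatly avoids the case split.

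The lower bound, however, has a genuine gap. Your inductive step requires an auxiliary vertex $v^*\in[\NN]^{\ell-1}$ at even Hamming distance from every member of the current clique, and you yourself note that such a $v^*$ need not exist for an arbitrary clique, that the natural candidate $(1,\dots,1)$ fails after one extension, and that you only have a ``plan'' to maintain unspecified invariants through $\ell-3$ iterations. As written, the existence of $v^*$ at each stage is unproven, so no clique of size $\ell(\NN-2)$ is actually produced. The paper avoids this entirely by constructing the clique in one shot from a small \emph{core} $\cB=\{B_1,\dots,B_\ell\}$: each $B_i$ is an $(\ell-1)$-set disjoint from $A_i$, meeting every other $A_j$ in one point, with $|B_i\cap B_j|\not\equiv\ell\pmod 2$; the semi-intersecting family is then $\{B_i\cup\{x\}:x\in A_i\setminus U\}$ with $U=\cup_i B_i$. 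The induction (the fusion Lemma~\ref{le:fusion}) is performed on cores rather than on cliques, so the only quantity that needs to be controlled is $|U|\le 2\ell+1$ — a much cleaner invariant than ``there exists a vertex at even distance from everything built so far,'' and precisely the structural device your sketch is missing.
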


\subsection{Higher powers of Kneser graphs} \label{ss_14}

We give bounds for the magnitude of $f_\ell(\NN,k)$ for large $\NN$. In particular, we show that it is not necessarily  linear in $\NN$.

\begin{Th} \label{thm_generalpowers}
We have
\begin{equation}\label{eq13}
 f_\ell(\NN,k)\leq  2^{\left\lfloor \frac{\ell}{2} \right\rfloor}\cdot  \left\lfloor \frac{\ell}{2} \right\rfloor !  \cdot \NN^{\left\lfloor \frac{\ell+1}{2} \right\rfloor} .
 \end{equation}
On the other hand, if $k \geq \left\lfloor \log_2(\ell+1)\right\rfloor$, then
 \begin{equation}\label{eq14}
    f_\ell(\NN,k) \geq \left(\left\lfloor \frac{\NN}{k} \right\rfloor \right)^{\left\lfloor \log_2(\ell+1)\right\rfloor} .
    \end{equation}
\end{Th}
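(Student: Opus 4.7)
For the lower bound the plan is to construct an explicit clique of size $t^m$, where $t := \trivup$ and $m := \lfloor \log_2(\ell+1)\rfloor$. Since $2^m - 1 \le \ell$, I identify the first $2^m - 1$ of the $\ell$ coordinates with the nonempty subsets $T \subseteq [m]$, and place a fixed $k$-set on each remaining coordinate (which then contributes nothing to the edge parity). For each nonempty $T$, pick positive integers $(s_{i,T})_{i \in T}$ summing to $k$ (possible because $|T|\le m\le k$) and choose pairwise disjoint sets $B^{(i,T)}_j \subseteq [\NN]$ of size $s_{i,T}$ for $i \in T$ and $j \in [t]$; only $tk \le \NN$ elements are used per coordinate. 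Each $\mathbf{j} = (j_1,\ldots,j_m) \in [t]^m$ is attached to the $\ell$-tuple whose coordinate-$T$ entry is
\[
X_T(\mathbf{j}) := \bigsqcup_{i \in T} B^{(i,T)}_{j_i}.
\]
For distinct indices $\mathbf{j}\ne\mathbf{j}'$ with difference set $D := \{i : j_i\ne j_i'\}$, coordinate $T$ contributes an edge iff $X_T(\mathbf{j})\cap X_T(\mathbf{j}')=\emptyset$ iff $T\subseteq D$, so the total number of disjoint coordinates equals $|\{T\ne\emptyset : T\subseteq D\}| = 2^{|D|}-1$, always odd, certifying that the $t^m$ tuples form the required clique.

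For the upper bound my plan is induction on $\ell$ via the recursion
\[
f_\ell(\NN,k) \le 2\lfloor \ell/2\rfloor\cdot \NN\cdot f_{\ell-2}(\NN,k) \qquad (\ell\ge 2),
\]
with base cases $f_0=1$ and $f_1(\NN,k) = \trivup \le \NN$; unrolling along the parity of $\ell$ produces precisely $2^{\lfloor \ell/2\rfloor}\lfloor \ell/2\rfloor!\,\NN^{\lfloor (\ell+1)/2\rfloor}$. The combinatorial input would be an incidence pigeonhole: for a clique $\mathcal{C}$ of size $N$, among the $N\ell k$ triples $(\mathbf{X},i,x)$ with $x\in X_i$ spread over the $\ell\NN$ pairs $(i,x)$, some $(i_0,x_0)$ is covered by at least $Nk/\NN$ tuples; within this subfamily coordinate $i_0$ never contributes to any edge (all its members share $x_0$ at coordinate $i_0$), and the projection deleting coordinate $i_0$ is injective — two distinct tuples agreeing on the remaining coordinates would have to differ at coordinate $i_0$ alone, but both still contain $x_0$, contradicting the odd edge-count condition — so the subfamily embeds into a clique in $KG(\NN,k)^{\cdot(\ell-1)}$. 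A second application of this reduction brings us into $KG(\NN,k)^{\cdot(\ell-2)}$.

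The hardest step is sharpening the two-fold pigeonhole from its naive output $(\NN/k)^2\,f_{\ell-2}$ down to the precise $2\lfloor \ell/2\rfloor\cdot\NN\cdot f_{\ell-2}$; iterating the weaker bound would yield only $f_\ell = O(\NN^\ell)$, one $\NN$-factor per coordinate, and miss the essential gain that each pair of coordinates should cost only a single $\NN$-factor. My plan for bridging this gap is to exploit the extra structure produced by the first reduction (every surviving tuple contains $x_0$ at coordinate $i_0$) together with an Imolay--Kocsis--Schweitzer-style linear-algebraic bound on the two-coordinate projections, effectively replacing the second combinatorial pigeonhole by an algebraic dimension count that only costs a constant factor in $\NN$. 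Making this calibration rigorous — so that the recursion produces exactly the coefficient $2\lfloor \ell/2\rfloor$ with no extraneous $\NN$-factor — is, I expect, the principal technical obstacle.
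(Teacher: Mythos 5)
Your lower-bound construction is correct and is, up to relabelling, the same as the paper's: the paper indexes the $2^t-1$ active coordinates by the distinct nonzero rows of an $\ell\times t$ zero-one matrix $H$, partitions each $A_\alpha$ according to a nonnegative matrix $C$ with the same support and row sums $k$, and parametrizes the family by functions $[t]\to[m]$; you index the coordinates by nonempty $T\subseteq[m]$, split $k$ as $(s_{i,T})_{i\in T}$, and parametrize by $\mathbf{j}\in[t]^m$. The central parity computation is identical in both versions — with $D$ the disagreement set, the number of coordinates on which the two tuples are disjoint is $|\{\,T\neq\emptyset:T\subseteq D\,\}|=2^{|D|}-1$, always odd — so this half of your proposal is a faithfully rephrased version of the paper's argument.

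The upper bound, however, has a genuine gap, and you say so yourself. Your target recursion $f_\ell\leq 2\lfloor\ell/2\rfloor\cdot\NN\cdot f_{\ell-2}$ does indeed unroll to~\eqref{eq13}, and your first reduction — pigeonhole on $(i_0,x_0)$ plus the observation that $\cS\mapsto\cS[v]:=\{S\setminus A_{i_0}:v\in S\in\cS\}$ is injective by parity — is precisely the paper's inequality $f_\ell\leq(\NN/k)f_{\ell-1}$. But you do not have a proof of the second step, and the substitute you gesture at (a ``linear-algebraic bound on two-coordinate projections'') is not the right tool; as you note, that step would otherwise cost another $\NN/k$ and only give $O(\NN^\ell)$. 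The missing idea is elementary and purely combinatorial: when $\ell$ is \emph{even}, an $\ell$-semi-intersecting family $\cS$ is automatically an intersecting family of $\ell k$-sets, since for $S\neq T$ the number of $i$ with $S\cap T\cap A_i=\emptyset$ is odd, hence $<\ell$, so some coordinate witnesses $S\cap T\neq\emptyset$. Fixing any $T\in\cS$ and using the standard star bound together with $\deg_\cS(v)\leq f_{\ell-1}(\NN,k)$ gives
\begin{equation*}
|\cS|\leq 1+\sum_{v\in T}\bigl(\deg_\cS(v)-1\bigr)\leq 1+\ell k\bigl(f_{\ell-1}(\NN,k)-1\bigr)\leq \ell k\,f_{\ell-1}(\NN,k)\qquad(\ell\text{ even}).
\end{equation*}
Chaining this with the averaging step then yields $f_\ell\leq\ell\NN f_{\ell-2}$ for even $\ell$ and $f_\ell\leq(\ell-1)\NN f_{\ell-2}$ for odd $\ell$, i.e.\ exactly $2\lfloor\ell/2\rfloor\NN f_{\ell-2}$. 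Without the observation that the family is intersecting at even levels, your recursion does not close; with it, it closes immediately and no algebraic dimension argument is needed.
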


This settles the exact magnitude for the cases $\ell\leq 4$.

\begin{Conj}
For any fixed $\ell$ and $k$ if $k$ is  large enough then
    $$f_\ell(\NN,k)=\Theta(n^{\left\lfloor \log_2(\ell+1)\right\rfloor}).$$
\end{Conj}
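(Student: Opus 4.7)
The plan is to prove the two bounds separately: an explicit construction for the lower bound, and induction for the upper bound. Set $t:=\lfloor\log_2(\ell+1)\rfloor$ and $m:=\lfloor n/k\rfloor$. Any clique in the $(2^t-1)$-fold xor-power of $G=KG(n,k)$ extends to a clique of the same size in the $\ell$-fold xor-power by padding each vertex with a fixed $k$-set in the $\ell-(2^t-1)$ extra coordinates: equal vertices are non-adjacent, contribute $0$ to the xor, and parity is preserved. So it suffices to construct a clique of size $m^t$ in the $(2^t-1)$-fold xor-power. I would index clique vertices by $I\in[m]^t$ and the $2^t-1$ coordinates by the nonempty subsets $S\subseteq[t]$. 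For each such $S$, split $k=\sum_{a\in S}k^S_a$ into positive parts (possible since $|S|\le t\le k$) and choose pairwise disjoint blocks $B^S_{a,j}\subseteq[n]$ with $|B^S_{a,j}|=k^S_a$ for $a\in S$, $j\in[m]$, occupying $mk\le n$ elements for each $S$. The $S$-coordinate of the vertex labeled $I$ is $W^S_I:=\bigsqcup_{a\in S}B^S_{a,I_a}$, a $k$-set. Then $W^S_I\cap W^S_J=\emptyset$ iff $S\subseteq D(I,J):=\{a:I_a\ne J_a\}$; for $I\ne J$ the number of nonempty $S\subseteq D(I,J)$ is $2^{|D(I,J)|}-1$, which is odd. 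Hence every pair is an edge, yielding a clique of size $m^t$ as required by~\eqref{eq14}.

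For the upper bound~\eqref{eq13}, I would induct on $\ell$ in steps of two via the recursion $f_\ell(n,k)\le 2\lfloor\ell/2\rfloor\cdot n\cdot f_{\ell-2}(n,k)$, anchored at $f_1(n,k)=\lfloor n/k\rfloor\le n$ and $f_2(n,k)\le 2n$ (immediate from Theorem~\ref{maintheorem} for $n$ large, trivial for small $n$). Iterating this recursion $\lfloor\ell/2\rfloor$ times produces exactly the constant $2^{\lfloor\ell/2\rfloor}\lfloor\ell/2\rfloor!$ and the exponent $\lfloor(\ell+1)/2\rfloor$. Given a clique $C$ in the $\ell$-fold xor-power, project to the last two coordinates via $\pi(x)=(x_{\ell-1},x_\ell)$; each fiber $\pi^{-1}(u,v)$ is a clique in the $(\ell-2)$-fold xor-power (the two equal coordinates contribute $0$ to the xor), so $|C|\le|\pi(C)|\cdot f_{\ell-2}(n,k)$. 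It remains to bound $|\pi(C)|\le 2\lfloor\ell/2\rfloor\cdot n$.

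This last step is the main obstacle. I would extract a transversal $T\subseteq C$ containing one vertex per fiber, so $T$ is a clique with injective last-two-coord projection and $|T|=|\pi(C)|$. For distinct $(u,v),(u',v')\in\pi(T)$ with representatives $x,y\in T$, the clique condition forces $\sum_{i\le\ell-2}A(x_i,y_i)\equiv 1+A(u,u')+A(v,v')\pmod 2$ (with $A$ the adjacency indicator of $KG(n,k)$), so the parity of the restricted xor depends only on the pair of last-two-coord projections. Pairing up the first $\ell-2$ coordinates into $\lfloor(\ell-2)/2\rfloor$ blocks and using the linear bound $f_2(n,k)=O(n)$ from Theorem~\ref{maintheorem} together with Lov\'asz's chromatic bound $\chi(KG(n,k))=n-2k+2$ for the intra-pair constraints, I aim to partition $\pi(T)$ into $O(\lfloor\ell/2\rfloor)$ colour classes each of size $O(n)$. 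I expect the most delicate part to be coordinating the parity constraints across all coordinate pairs simultaneously, likely requiring a linear-algebra argument over $\mathbb{F}_2$ to pin down the exact multiplicative factor $2\lfloor\ell/2\rfloor$.
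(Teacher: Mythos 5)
The statement you are addressing is a \emph{conjecture}: the paper itself does not prove it, and your proposal does not prove it either. The issue is not a technical slip but a mismatch of exponents. Your plan establishes the lower bound $f_\ell(\NN,k)\geq \left\lfloor \NN/k\right\rfloor^{\lfloor \log_2(\ell+1)\rfloor}$ (this part is sound, and it is essentially the paper's own construction for~\eqref{eq14}: vertices indexed by $[m]^t$, coordinates indexed by nonempty subsets of $[t]$, $k$ split into blocks, and the parity count $2^{|D(I,J)|}-1$ odd), but your upper bound recursion terminates at $f_\ell(\NN,k)=O\bigl(\NN^{\lfloor (\ell+1)/2\rfloor}\bigr)$, which is~\eqref{eq13}. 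Since $\lfloor(\ell+1)/2\rfloor=\lfloor\log_2(\ell+1)\rfloor$ only for $\ell\leq 4$, your two bounds do not meet for any $\ell\geq 5$, so the asserted $\Theta\bigl(\NN^{\lfloor\log_2(\ell+1)\rfloor}\bigr)$ is not obtained. What you have sketched is (at most) Theorem~\ref{thm_generalpowers}, not the conjecture; closing the gap would require an upper bound with exponent $\lfloor\log_2(\ell+1)\rfloor$, which is exactly the open problem.

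Even as a proof of~\eqref{eq13} your sketch is incomplete at the step you yourself flag: the projection $\pi(C)$ onto the last two coordinates is \emph{not} a clique in the $2$-fold xor-power (the clique condition only constrains the parity over all $\ell$ coordinates), so neither Theorem~\ref{maintheorem} nor any $f_2$ bound applies to $\pi(C)$ directly, and the claimed inequality $|\pi(C)|\leq 2\lfloor \ell/2\rfloor \NN$ is unsupported. The paper avoids this by a one-coordinate-at-a-time recursion: fixing $v\in A_i$, the link $\m{S}[v]$ is an $(\ell-1)$-semi-intersecting family, giving $f_\ell\leq (\NN/k)f_{\ell-1}$ by degree counting, and when $\ell$ is even the family is intersecting, giving $f_\ell\leq k\ell f_{\ell-1}$; alternating these two steps yields~\eqref{eq13} without ever needing to control a two-coordinate projection.
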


\subsection{Semi-intersecting families}

The vertex set of a Kneser graph $KG(A,k)$ is a $k$-uniform hypergraph.
Given $\ell$ Kneser graphs $KG(A_i,k)$ $(1\leq i\leq \ell)$ with pairwise disjoint $n$-element base sets $A_1, \dots, A_\ell$
 the vertices of their xor-product 
 are naturally correspond to those $k\ell$-element subsets $S$ of
 $A_1 \cup A_2 \cup \ldots \cup A_\ell $ where 
 $|S\cap A_i|=k$ for each $i$.
The set pair $\{ S, S'\}$ corresponds to an edge in the xor-product $KG(A_1,k)\cdots KG(A_\ell,k)$ if $S\cap S'\cap A_i=\emptyset$ in an odd number of cases for $1\leq i\leq \ell$.

\begin{Def}
A family of sets $\mathcal{S}$ on the pairwise disjoint base sets $A_1 \cup A_2 \cup \ldots \cup A_\ell $ is called an $\ell ${\em -semi-intersecting family} with parameters $\NN$ and $k$ if 
\begin{itemize}
    \item $|A_1|=|A_2| =\dots =|A_\ell|=n$,
    \item $|S\cap A_1|=|S\cap A_2|=\ldots=|S\cap A_\ell |=k$ for each $S \in \mathcal{S}$, and
    \item for distinct $S$, $T \in \m{S}$, we have $S\cap T\cap A_i=\emptyset$ for an odd number of $i$'s, $1 \leq i \leq \ell$.
\end{itemize}
\end{Def}

There is a one-to-one correspondence between $\ell$-semi-intersecting families and cliques in $KG(\NN,k)^\ell$. Hence
 $f_\ell (\NN,k)= \omega \left(KG(\NN,k)^\ell\right)$ is the maximum size of an  $\ell $-semi-intersecting family with parameters $\NN$ and $k$.

We prefer to work with this equivalent hypergraph reformulation. 
Similar questions in extremal combinatorics with two part set systems were studied extensively, see, e.g., \cite{gerbner2012almost} and~\cite{article}.

\section{Determining the order of magnitude of $c(k)$} \label{twosets}

In this section we prove the bounds stated in Theorem~\ref{maintheorem}.
In this case $\ell=2$, we use simply semi-intersecting instead of 2-semi-intersecting, and we denote the base sets $A_1$, $A_2$ by $A$ and $B$.

\subsection{Lower bound construction} \label{ss_lowerboundconstruction} Here we give a construction yielding~\eqref{eq11}.
\begin{proof}
For easier notation introduce $m:=\frac{1}{2}\binom{2k}{k}$. Choose a subset of $K \subset A$ with $|K|=2k$. Label the $k$-element subsets of $K$ by
  $$H_1, H_2, \ldots, H_{m}, G_1, G_2, \ldots, G_{m}$$ such that $H_i$ and $G_i$ are disjoint ($1 \leq i \leq m$).
Let $L_2, L_3, \ldots,  L_m$ be pairwise disjoint $k^2$-element subsets of $B$. This is possible as $\NN\geq \frac{1}{2}\left(\binom{2k}{k}-2\right) k^2$. Let us arrange the elements of $L_i$ to a $k\times k$ rectangular point lattice.
There are $n-(m-1)k^2$ elements of $B\setminus \bigcup L_i$, so we can select additional pairwise disjoint $k$-element subsets $F_1,F_2,\ldots,F_d$ from them, where $d=\left\lfloor\frac{\NN}{k}\right\rfloor-(m-1)k.$

Define a semi-intersecting family $\m{S}$ as follows. We let $S\subset A \cup B$ in $\m{S}$ if one of the following holds.
\begin{itemize}
    \item $S \cap A=H_i$ and $S \cap B$ corresponds to a row of the lattice in $L_i$, 
    \item $S \cap A=G_i$ and $S \cap B$ corresponds to a column of the lattice in $L_i$, 
    \item $S \cap A=H_1$ and $S \cap B=F_i$ for some $1 \leq i \leq d$.
\end{itemize}

This $\m{S}$ is a semi-intersecting family with parameters $\NN$ and $k$, because if $S,T \in \m{S}$ with $S \neq T$, and they are disjoint in $A$ then $\{S \cap A, T \cap A\}=\{H_i, G_i\}$ for some $2 \leq i \leq m$. On the other hand, this is the only case when $S$ and $T$ intersect in $B$, as they intersect only if $S \cap B$ is a row (or column) of some $L_i$ and $T \cap B$ is a column (or row) of the same $L_i$.

It remains to count the members of $\m{S}$. There are $(m-1)k$ sets from both the first and second bullet point, and $d$ from the third one.
Hence $$|\m{S}|=2(m-1)k+d=\left(\binom{2k}{k}-2\right)k+\trivup - \frac{\binom{2k}{k}-2}{2}k
=\left\lfloor\frac{n}{k}\right\rfloor+\binom{2k}{k}\frac{k}{2}-k.$$
\end{proof}

\subsection{Cross intersecting matchings}\label{ss_cross}

A set of hypergraphs $\cA_1, \dots, \cA_t$ is called a $k$-uniform {\em cross intersecting} matching of size $t$ and of type $(d_1, \dots,  d_t)$
 if $t,k\geq 2$, $d_i \geq 2$ for $1 \leq i \leq t$, each $\cA_i$ consists of $d_i$ pairwise disjoint $k$-element sets and $X\cap Y\neq \emptyset$ for
 $X\in \cA_i$, $Y\in \cA_{j}$ whenever $1\leq i\neq j\leq t$. Obviously, $d_i\leq k$ for every $i$.
The classical set-pair theorem of Bollob\'as~\cite{bollobas1965generalized} implies that $t\leq \frac{1}{2}\binom{2k}{k}$ and here equality holds only if $d_1=\dots =d_t=2$ and each $\cA_i$ consists of a complementary pair of $k$-sets of a base set $L$, $|L|=2k$.
There are many generalizations of Bollob\'as's theorem, see, e.g., Alon~\cite{Alon85} where the exterior algebra method was introduced. Even the case of 2-independent $d$-partitions is highly non-trivial, i.e., when $\cup \cA_i$ is the same $dk$-element set for all $i$. For this case Gargano, K\"orner, and Vaccaro~\cite{KornerJ} showed that for any fixed $d$ one can have $t= \Omega( 4^{k(1-o(1))})$
using their Sperner capacity method in information theory.
Here we show an upper bound.

\begin{Lemma}\label{cross-int}
There exists a sequence $\gamma(2), \gamma(3), \ldots$ with $\gamma(k) \to 0$ exponentially as $k \to \infty$, such that for every $k$-uniform cross intersecting matching of type $(d_1, \dots, d_t)$,
\begin{equation}\label{eq221}
  \sum_i d_i(d_i-1) \leq (1+ \gamma(k))\binom{2k}{k}.
\end{equation}
\end{Lemma}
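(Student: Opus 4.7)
The plan is to apply the probabilistic form of the Bollob\'as set-pair theorem to the ordered pairs extracted from the cross-intersecting matching. For each $i$ and each ordered pair $(a,b)\in[d_i]^2$ with $a\neq b$, associate the set pair $(X_i^{(a)},X_i^{(b)})$ of disjoint $k$-sets; the total count of such ordered pairs equals $\sum_i d_i(d_i-1)$.

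Let $\pi$ be a uniformly random linear order on the ground set $V=\bigcup_i\bigcup\cA_i$, and let $E_{i,a,b}$ denote the event that every element of $X_i^{(a)}$ precedes every element of $X_i^{(b)}$ in $\pi$. Each such event has probability $\binom{2k}{k}^{-1}$. The cross-intersection hypothesis supplies the classical Bollob\'as swap: if $i\neq j$ and both $E_{i,a,b}$ and $E_{j,c,d}$ held, then witnesses $x\in X_i^{(a)}\cap X_j^{(d)}$ and $y\in X_j^{(c)}\cap X_i^{(b)}$ (nonempty by cross-intersection) would force $\pi(x)<\pi(y)$ and $\pi(y)<\pi(x)$ simultaneously. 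Hence for every $\pi$ all satisfied events lie in a single matching $\cA_{i^\ast(\pi)}$, and writing $N(\pi)$ for the number of satisfied events, linearity of expectation yields
\[
  \sum_i \frac{d_i(d_i-1)}{\binom{2k}{k}} = \mathbb{E}\bigl[N(\pi)\bigr].
\]

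It now suffices to show $\mathbb{E}[N]\leq 1+\gamma(k)$ with $\gamma(k)\to 0$ exponentially. Write $\mathbb{E}[N]=\sum_{r\geq 1}\Pr(N\geq r)$; the first term contributes at most $1$, providing the main term. For $r\geq 2$, the event $N\geq 2$ demands a single $\cA_{i^\ast}$ with two distinct clean ordered pairs simultaneously. A short case analysis on the overlap pattern (whether the two ordered pairs share $0$, $1$, or $2$ blocks) shows that in every case the joint probability is at most $\binom{3k}{k}^{-1}$, because at least three distinct $k$-blocks of $\cA_{i^\ast}$ must then fall in a prescribed relative order on $3k$ aligned elements. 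Combining with the elementary Bollob\'as bound $t\leq\frac{1}{2}\binom{2k}{k}$ (obtained by applying standard Bollob\'as once per matching to the pair $(X_i^{(1)},X_i^{(2)})$) and a Markov bound on $c_i(c_i-1)$, one obtains $\Pr(N\geq 2)=O\bigl(k^4(16/27)^k\bigr)$, and hence $\mathbb{E}[N]\leq 1+O\bigl(k^6(16/27)^k\bigr)$.

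The main obstacle is the uniform case analysis for the joint probability of two distinct clean ordered pairs within a single matching, i.e.\ checking that each of the three overlap patterns (sharing $0$, $1$, or $2$ blocks) yields a joint probability dominated by $\binom{3k}{k}^{-1}$. The two-shared-blocks subcase is automatic (the two events are contradictory), the one-shared-block subcase forces a specific prefix structure on $3k$ elements, and the no-shared-block subcase has joint probability $\binom{2k}{k}^{-2}$, which is even smaller than $\binom{3k}{k}^{-1}$; balancing these estimates with the count of matchings yields the required exponentially small $\gamma(k)$.
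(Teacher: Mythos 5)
Your proof is correct and rests on exactly the same two pillars as the paper's: the uniformly random ordering $\pi$ of the ground set, and the Bollob\'as swap showing that for every $\pi$ all ``clean'' ordered pairs $(X^{(a)}_i, X^{(b)}_i)$ come from a single matching $\cA_{i^*(\pi)}$, which is what makes $\Pr(N\geq 1)\leq 1$ available as the main term. Where you diverge is in the bookkeeping of the second-order correction. The paper fixes $i$, writes $E_i=\bigcup_{\alpha\neq\beta}O_{\alpha,\beta}$, and applies the Bonferroni inequality $\Pr(E_i)\geq\sum\Pr(O_{\alpha,\beta})-\tfrac12\sum\Pr(O\cap O')$ separately for each $i$; it then evaluates the pair probabilities case by case (the same three overlap patterns you identify, plus the opposite-pair case $\alpha_1=\beta_2,\alpha_2=\beta_1$ which has probability $0$) and sums the resulting lower bounds against $\sum_i\Pr(E_i)\leq 1$. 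You instead write $\mathbb{E}[N]=\sum_{r\geq 1}\Pr(N\geq r)$ globally, peel off $\Pr(N\geq 1)\leq 1$, and bound the tail by $(k(k-1)-1)\Pr(N\geq 2)$ with a union bound on $\Pr(N\geq 2)$. The two corrections are the same quantity up to a polynomial factor: your extra $O(k^2)$ from ``$N\leq k(k-1)$'' makes your $\gamma(k)=O(k^6(16/27)^k)$ a bit lossier than the paper's $O(k(16/27)^k)$, but the exponential rate --- governed by $\binom{2k}{k}/\binom{3k}{k}\sim\tfrac{2}{\sqrt3}(16/27)^k$ in both arguments --- is identical, so the lemma as stated still follows. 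The paper's per-$i$ Bonferroni is slightly sharper; your global tail decomposition is slightly cleaner to state. Your case analysis of the three overlap patterns (sharing $0$, $1$, or $2$ blocks) is correct: the one-shared-block cases give $\binom{3k}{k}^{-1}$ (or the smaller $\binom{3k}{2k}^{-1}\binom{2k}{k}^{-1}$ when the shared block is in the middle of a chain), and the disjoint case gives $\binom{2k}{k}^{-2}\leq\binom{3k}{k}^{-1}$.
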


We {\em conjecture} that the true value of $\gamma$ is zero, for all $k$.

\begin{proof}
Define $A:= \cup_i (\cup \cA_i)$, $\NN:= |A|$.
Given any ordering $\pi$ of $A$ and two non-empty subsets $X,X' \subset A$, we say that $X<_\pi X'$ if $\pi(x)<\pi(x')$ for all $x \in X$ and $x' \in X'$.
We call $\pi$ of {\em type} $i$ if there are two sets $X, X' \in \mathcal{A}_i$ with $X <_\pi X'$.
Every permutation $\pi$ can have only at most one type. Indeed,
if $X_i, X'_i \in \m{A}_i$ with $X_i <_\pi X'_i$ and $X_j, X'_j \in \m{A}_j$ with $X_j<_\pi X'_j$ then the cross intersection property implies that there are  elements $u \in X_i \cap X'_j$ and $v \in X'_i \cap X_j$. From $X_i<_\pi X'_i$ we get $\pi(u)<\pi(v)$ and from $X_j<_\pi X'_j$ we get $\pi(v)<\pi(u)$, a contradiction.

Consider a uniform probability distribution on the $n!$ possible orderings of $A$.
Let $E_i$ denote the event that the random variable $\pi$ is of type $i$.
We have $\sum_{i} \Pr(E_i)\leq 1$ because for $i \neq j$ the events $E_i$ and $E_j$ cannot occur simultaneously.
Fix $i$, our goal is to approximate $\Pr(E_i)$.
From now on, $X_1,X_2,\ldots, X_{d_i}$ denotes the members of $\mathcal{A}_i$.
To simplify the presentation we leave out the index $i$ from $d_i$ in the following calculation.


Let $O_{\alpha,\beta}$ be the event that $X_\alpha<X_\beta $. There are $d(d-1)$ such events, because $1 \leq \alpha,\beta \leq d$ and $\alpha \neq \beta$. Since $E_i=\bigcup_{\alpha\not=\beta}O_{\alpha,\beta}$ the inclusion-exclusion principle yields
$$\Pr(E_i)\geq\sum_{\alpha\not=\beta}\Pr(O_{\alpha,\beta})-\frac{1}{2}\sum_{\alpha_1\not=\beta_1,\alpha_2\not=\beta_2,(\alpha_1,\beta_1)\not=(\alpha_2,\beta_2)}\Pr(O_{\alpha_1,\beta_1}\cap O_{\alpha_2,\beta_2}).$$

In the first sum, $\Pr(O_{\alpha,\beta})=\frac{1}{\binom{2k}{k}}$, because this is the probability that $\pi$ arranges the elements of $X_\alpha\cup X_\beta$ such that $X_\alpha< X_\beta$.
Now we calculate $\Pr(O_{\alpha_1,\beta_1}\cap O_{\alpha_2,\beta_2})$ for all possible $\alpha_1, \beta_1, \alpha_2, \beta_2$. We categorize them  into 
 six groups.


---\enskip
$\alpha_1=\beta_2$ and $\alpha_2=\beta_1$. In this case
$$\Pr(O_{\alpha_1,\beta_1}\cap O_{\alpha_2,\beta_2})=0.$$

---\enskip
 The numbers $\alpha_1, \beta_1, \alpha_2$ and $\beta_2$ are all distinct. In this case $O_{\alpha_1,\beta_1}$ and $O_{\alpha_2,\beta_2}$ are independent events, hence
$$\Pr(O_{\alpha_1,\beta_1}\cap O_{\alpha_2,\beta_2})=\frac{1}{\binom{2k}{k}\binom{2k}{k}}.$$
Here there are  $d(d-1)(d-2)(d-3)$ possibilities for $\alpha_1, \beta_1, \alpha_2, \beta_2$.

---\enskip
 $|\{\alpha_1, \beta_1, \alpha_2, \beta_2\}|=3$ and $\alpha_2=\beta_1$. In this case $X_{\alpha_1}<X_{\beta_1}=X_{\alpha_2}<X_{\beta_2}$, therefore
$$\Pr(O_{\alpha_1,\beta_1}\cap O_{\alpha_2,\beta_2})=\frac{1}{\binom{3k}{2k}\binom{2k}{k}}.$$
There are $d(d-1)(d-2)$ such possibilities.

---\enskip
 $|\{\alpha_1, \beta_1, \alpha_2, \beta_2\}|=3$ and $\alpha_1=\beta_2$. This can be calculated in the same way as the previous case.

---\enskip
 $|\{\alpha_1, \beta_1, \alpha_2, \beta_2\}|=3$ and $\beta_1=\beta_2$. This means that $X_{\alpha_1}<X_{\beta_1}$ and $X_{\alpha_2}<X_{\beta_1}$ are both true. Hence
$$\Pr(O_{\alpha_1,\beta_1}\cap O_{\alpha_2,\beta_2})=\frac{1}{\binom{3k}{k}}.$$
As in the previous cases, there are  $d(d-1)(d-2)$ possibilities for this.

---\enskip
 $|\{\alpha_1, \beta_1, \alpha_2, \beta_2\}|=3$ and $\alpha_1=\alpha_2$.
This is the same calculation as the previous case.

Combining the calculations above and using $2\leq d\leq k$ we arrive at the following inequalities.
\begin{multline*}
  \Pr(E_i)\geq \frac{d(d-1)}{\binom{2k}{k}}-\frac{1}{2}\frac{d(d-1)(d-2)(d-3)}{\binom{2k}{k}\binom{2k}{k}}
     -\frac{d(d-1)(d-2)}{\binom{3k}{2k}\binom{2k}{k}}-\frac{d(d-1)(d-2)}{\binom{3k}{k}} \\
  =  \frac{d(d-1)}{\binom{2k}{k}} \left(1 - \frac{1}{2}\frac{(d-2)(d-3)}{\binom{2k}{k}}
     -\frac{(d-2)}{\binom{3k}{2k}}-\frac{(d-2)\binom{2k}{k} }{\binom{3k}{k}}\right)\\
  \geq  \frac{d(d-1)}{\binom{2k}{k}} \left(1 - \frac{(k-2)(k-3)}{ 2\binom{2k}{k}}
     -\frac{(k-2)}{\binom{3k}{2k}}-\frac{(k-2)\binom{2k}{k} }{\binom{3k}{k}}\right):= \frac{d(d-1)}{\binom{2k}{k}}\cdot \frac{1} {1+\gamma(k)}.
\end{multline*}
Here $\gamma(2)=0$, $\gamma(3)= 1/3$, $\gamma(4)<0.44$, $\gamma(5)<0.36$ and then it exponentially converges to 0 as $k\to\infty$.

Summing these lower bounds for all $i$ we get~\eqref{eq221}. \end{proof}

\subsection{Proof of the upper bound for $c(k)$}\label{sec_upper}
We prove~\eqref{eq12} in the following form. For all $\NN, k\geq 2$,
\begin{equation*}
  f_2(\NN,k) \leq  \trivup+ (1+\gamma(k))k\binom{2k}{k},
\end{equation*}
where we define $\gamma(2)=\frac{1}{3}$ and $\gamma(k)$ comes from the proof of Lemma~\ref{cross-int} for $k \geq 3$.

Consider a semi-intersecting family $\m{S}$  with parameters $\NN$ and $k$ and base sets $A$ and $B$.
Since $(1+\gamma(k))k\binom{2k}{k} \geq 2k^3$ we may suppose $|\m{S}|> 2k^3$.


\begin{Lemma}\label{gyenge}
If $|\m{S}|> 2k^3$
then either all degrees in $A$ are at most $k$, or all degrees in $B$ are at most $k$.
\end{Lemma}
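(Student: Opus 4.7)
The plan is a direct contrapositive: I suppose $d_A(X) \geq k+1$ for some $X \in \binom{A}{k}$ and $d_B(Y) \geq k+1$ for some $Y \in \binom{B}{k}$, where $d_A(X) := |\{S \in \m{S}: S \cap A = X\}|$ counts the members of $\m{S}$ whose $A$-part is exactly $X$ (and $d_B$ analogously), and derive a contradiction. Fix $k+1$ sets $S_1, \dots, S_{k+1} \in \m{S}$ with $A$-part $X$ and $k+1$ sets $T_1, \dots, T_{k+1} \in \m{S}$ with $B$-part $Y$. For $i \neq i'$ one has $S_i \cap S_{i'} \cap A = X \neq \emptyset$, so the semi-intersecting rule (for $\ell = 2$, exactly one of the two triple intersections is empty) forces $S_i \cap S_{i'} \cap B = \emptyset$. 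Hence $B_i := S_i \cap B$ are $k+1$ pairwise disjoint $k$-subsets of $B$, and symmetrically $A_j := T_j \cap A$ are $k+1$ pairwise disjoint $k$-subsets of $A$.

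A short pigeonhole step then finishes the job. Because $|X| = k$ while the $A_j$'s are $k+1$ pairwise disjoint $k$-sets, they cannot all meet $X$---otherwise one could select a distinct element of $X$ from each $A_j \cap X$, producing $k+1$ elements in the $k$-element set $X$. So some $A_{j_0}$ is disjoint from $X$, and symmetrically some $B_{i_0}$ is disjoint from $Y$. The sets $S_{i_0}$ and $T_{j_0}$ are distinct, because $S_{i_0} \cap A = X$ whereas $T_{j_0} \cap A = A_{j_0}$ is disjoint from $X$ and hence unequal to $X$. But simultaneously $S_{i_0} \cap T_{j_0} \cap A = X \cap A_{j_0} = \emptyset$ and $S_{i_0} \cap T_{j_0} \cap B = B_{i_0} \cap Y = \emptyset$, contradicting the semi-intersecting condition, which demands that exactly one of the two triple intersections be empty.

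I do not anticipate a real obstacle here: the whole argument is just pigeonhole plus the parity rule. The only point requiring a moment of care is verifying $S_{i_0} \neq T_{j_0}$ before invoking the semi-intersecting constraint, since that constraint is imposed only on distinct pairs; this is handled cleanly by comparing their $A$-parts. As an incidental observation, this argument does not seem to use the hypothesis $|\m{S}| > 2k^3$ at all, giving the same conclusion for every semi-intersecting family---presumably the author keeps the threshold in the statement only because it is already automatic in the context of Section~\ref{sec_upper} where Lemma~\ref{gyenge} is applied.
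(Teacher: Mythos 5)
There is a genuine gap, and it begins with the interpretation of the word ``degree.'' In the paper, a degree is a \emph{vertex} degree: for an element $y\in B$, $\deg_{\m S}(y)$ is the number of members of $\m S$ containing $y$. This is clear from the way the lemma is invoked immediately afterwards (``we may suppose that $\deg_{\m S}(y)\leq k$ for all $y\in B$'') and from the subsequent argument, which takes ``a vertex $p_i\in B$ with maximum degree'' and uses $2\leq d_i\leq k$ for the number of sets through $p_i$. Your $d_A(X)$ is instead the multiplicity of a $k$-set $X\in\binom{A}{k}$ as an $A$-part. These are not the same quantity: a vertex $a\in A$ can lie in $k+1$ members of $\m S$ all of whose $A$-parts are pairwise distinct, in which case every $k$-set containing $a$ has multiplicity $1$ while $\deg_{\m S}(a)=k+1$. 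Consequently, the statement you prove (bounding multiplicities of $k$-sets) does not imply the statement of the lemma, and it cannot substitute for it in Section~\ref{sec_upper}, which really needs a bound on $\deg_{\m S}(y)$ for individual $y\in B$.

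Your closing observation that the hypothesis $|\m S|>2k^3$ ``does not seem to be used at all'' is in fact the warning sign: the hypothesis is genuinely needed for the correct reading of the lemma. The paper's proof assumes some $a\in A$ has vertex degree $>k$, picks $k+1$ members $S_1,\dots,S_{k+1}$ through $a$, and sets $X:=\bigcup_i(S_i\cap A)$, a set of size at most $k^2$ (not $k$!). Since the $S_i\cap B$ are $k+1$ pairwise disjoint $k$-sets, every $T\in\m S$ misses at least one of them in $B$, hence (by the $\ell=2$ semi-intersecting rule) meets $X$ in $A$; from this, every $y\in B$ has $\deg_{\m S}(y)\leq|X|\leq k^2$, because each $T\ni y$ is pinned down by a pair $\{x,y\}$ with $x\in X$, and no pair can occur in two members. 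If in addition some $b\in B$ has degree $>k$, the symmetric argument gives $\deg_{\m S}(x)\leq k^2$ for all $x\in A$, and since $\m S$ is intersecting one gets $|\m S|\leq\sum_{z\in T}\deg_{\m S}(z)\leq 2k\cdot k^2=2k^3$ for any fixed $T\in\m S$, contradicting $|\m S|>2k^3$. Your pigeonhole idea is sound as far as it goes, but it addresses a weaker and different statement; the real lemma needs the union bound $|X|\leq k^2$ and the cardinality hypothesis to close the argument.
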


\begin{proof}
Assume that there is an $a \in A$  with degree more than $k$. Let $S_1, S_2, \ldots , S_{k+1} \in \m{S}$ be some (distinct) sets containing $a$ and define $X:=  \bigcup_{1\leq i \leq k+1} (S_i \cap A)$.
Note that $|X| \leq k^2$ as we take the union of $k+1$ sets with $k$ elements, all containing $a$.
The sets $B\cap S_i$ are pairwise disjoint for $1\leq i \leq k+1$ so no $T \in \m{S}$ can intersect each of them in $B$.
Hence $T \cap X \neq \emptyset$ for all $T \in \m{S}$.

We claim that the degree of each $y\in B$ is at most $|X|$, i.e., $\deg_{\m{S}}(y)\leq k^2$. Indeed, a set $T\in \m{S}$ with $y\in T$ contains a pair $\{ x, y\}$ with $x\in X$ and every such pair can appear in at most one member of $\m{S}$.

Similarly, $b\in B$ and $\deg_{\m{S}}(b)>k$ imply $\deg_{\m{S}}(x)\leq k^2$ for every $x\in A$.

Fix any member $T\in \m{S}$. Since $\m{S}$ is intersecting we obtain
$|\m{S}|\leq \sum_{z\in T}\deg_{\m{S}}(z) \leq 2k\cdot k^2$, and we are done.
\end{proof}

From now on, we may suppose that  $\deg_{\m{S}}(y)\leq k$ for all $y \in B$.
Starting with $\m{S}_0:=\m{S}$ we define a series of families $\m{S}_0 \supset \m{S}_1 \supset \ldots \supset \m{S}_q$ as follows. 
If the families $\m{S}_0, \m{S}_1, \ldots , \m{S}_{i-1}$ have already been created, and the members of $\m{S}_{i-1}$ were pairwise disjoint in $B$ then we let $q:=i-1$ and $\m{S}_{q}:=\m{S}_{i-1}$.  Note that, $\m{S}_{q} \leq\left\lfloor\frac{n}{k}\right\rfloor$.

Otherwise, define $\m{S}_i$ as follows.
Take a vertex $p_i \in B$ with maximum degree in $\m{S}_{i-1}$, let $\mathcal{Z}_i \subset \m{S}_{i-1}$ be the family of sets containing $p_i$, and let $d_i:=|\m{Z}_i|$. We have $2 \leq d_i \leq k$.
Denote by $\mathcal{M}_{i} \subset \m{S}_{i-1} \setminus \m{Z}_i$ the family of sets which intersect at least one set of $\mathcal{Z}_{i}$ in $B$. Finally, let $\m{S}_i:=\m{S}_{i-1} \setminus (\m{Z}_i \cup \m{M}_i)$.

Now we give an upper bound for $|\m{S}_{i-1} \setminus \m{S}_{i}|$.
Since $p_i\in \cap \m{Z}_i$ we have
$\left|B \cap \bigcup_{Z \in \m{Z}_i} Z \right| \leq 1+(k-1)d_i$.
An element of $B \cap \left( \bigcup \m{Z}_i\right) \setminus \{ p_i\}$ can meet at most $d_i-1$
 members of $\mathcal{M}_{i}$, so we get $|\m{M}_i| \leq (k-1)d_i(d_i-1)$.
Hence $|\m{S}_{i-1} \setminus \m{S}_{i}|=|\m{Z}_i \cup \m{M}_i| \leq d_i+(k-1)d_i(d_i-1)$ and
$$|\m{S}|=|\m{S}_q|+\sum_{1\leq i\leq q} |\m{S}_{i-1} \setminus \m{S}_i| \leq \trivup+\sum_{1\leq i\leq q} (d_i+(k-1)d_i(d_i-1)).
$$
We get
$$  |\m{S}|-\trivup \leq \sum_{1\leq i\leq q} d_i+(k-1)\sum_{1\leq i\leq q} d_i(d_i-1) \leq k \sum_{1\leq i\leq q} d_i(d_i-1).
$$
We need to bound $\sum_{1\leq i\leq q}  d_i(d_i-1)$.

Observe that  the sets in the family $\mathcal{Z}_{i}$ are pairwise disjoint in $A$ as they have a common element in $B$.
Define $\cA_i$ as $\{ Z\cap A: Z\in \m{Z}_i \}$.
If $S \in \m{S}_{i}$ and $Z \in \mathcal{Z}_{i}$ then $S\cap Z\cap B=\emptyset$, as any set from $\m{S}_{i-1}$ that intersects $Z$ in $B$ is in $\m{Z}_i \cup \m{M}_i$ by definition. Hence $S \cap Z\cap A\neq \emptyset$. In particular, any $Z \in \m{Z}_i$ and $Z' \in \m{Z}_j$ intersect in $A$ if $i < j$, i.e., $X\cap X'\neq \emptyset$ if $X\in \cA_i$, $X'\in \cA_j$, and $i \neq j$.

If $q\geq 2$ then $\cA_1, \dots, \cA_{q} $ form a $k$-uniform cross intersecting matching and then Lemma~\ref{cross-int} completes the proof. In case of $q\leq 1$ we have $|\m{S}| \leq \trivup + 2k^2(k-1)$ and we are done. \qed

\section{Multiproducts of the complete graphs} \label{sec_k1}

\subsection{Algebraic upper bound for the product of complete graphs}\label{ss:completes_upper}

Complete graphs are also Kneser graphs with $k=1$.
We prove the upper bound  $f_\ell(\NN,1)\leq \NN \ell-\ell+1$ in Theorem~\ref{th_1} in the following stronger form.
Suppose that $\ell\geq 2$, $n_1, \dots, n_\ell$ are positive integers and $A_1, \dots, A_\ell$ are disjoint sets
 of sizes $n_1, \dots, n_\ell$, $V:=A_1 \cup A_2 \cup \ldots \cup A_\ell $.

\begin{Th}\label{th_1_upper}
Let $G$ be the xor-product of the complete graphs $K_{n_1}, \dots, K_{n_\ell}$.
Then $\omega(G)\leq |V|-\ell+1$.
\end{Th}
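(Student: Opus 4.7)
The plan is to embed each vertex of the xor-product into a small $\F_2$-vector space $W$ so that the vertices of any clique map to linearly independent elements of $W$; since $\dim_{\F_2} W = |V|-\ell+1$, the required bound on the clique size follows.

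Let $W$ be the $\F_2$-linear space of functions $A_1\times\cdots\times A_\ell\to\F_2$ of the form $c+\sum_{i=1}^\ell h_i(x_i)$ with $c\in\F_2$ and $h_i\colon A_i\to\F_2$. A direct parameter count shows that $\dim_{\F_2} W = 1+\sum_i n_i-\ell = |V|-\ell+1$: the $|V|+1$ choices for $(c,h_1,\ldots,h_\ell)$ admit exactly the $\ell$-dimensional family of shifts $h_i\mapsto h_i+c_i$, $c\mapsto c+\sum_i c_i$ with $c_1,\ldots,c_\ell\in\F_2$.

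To each vertex $v=(a_1,\dots,a_\ell)\in A_1\times\cdots\times A_\ell$ I would assign the function
\[
F_v(x):=1+d(v,x)\pmod 2 \;=\; (1+\ell) + \sum_{i=1}^\ell \mathbf{1}[x_i=a_i],
\]
where $d(v,x):=\#\{i\colon v_i\neq x_i\}$ is the Hamming distance and the second equality uses $d(v,x)=\ell-\sum_i\mathbf{1}[x_i=a_i]$; in particular $F_v\in W$. The crucial property, and the only use of the clique hypothesis, is that if $C$ is a clique in the xor-product then for distinct $v,w\in C$ the distance $d(v,w)$ is odd, so $F_v(w)=0$; and $F_v(v)=1$. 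Consequently $F_v|_C$ equals the indicator function $\mathbf{1}_{\{v\}}$ of the singleton $\{v\}$ on $C$, so $\{F_v|_C\}_{v\in C}$ is $\F_2$-linearly independent, forcing $\{F_v\}_{v\in C}\subseteq W$ to be linearly independent as well.

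We conclude $|C|\le\dim_{\F_2} W=|V|-\ell+1$, proving Theorem~\ref{th_1_upper}. The only nontrivial step is the dimension count for $W$, which is elementary; the argument works uniformly in $\ell$ and in the $n_i$, and uses the odd-parity structure of the xor-product in an essential but very clean way.
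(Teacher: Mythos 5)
Your proof is correct, and it takes a genuinely cleaner route than the paper's. Both arguments work over $\mathbb{F}_2$, but the paper attaches characteristic vectors $\vec{S}\in\mathbb{F}_2^V$ to the $\ell$-element sets $S$ encoding the clique vertices, together with the vectors $\vec{A_1},\dots,\vec{A_\ell}$, and then shows that these $|\cS|+\ell$ vectors span a subspace of dimension at least $|\cS|+\ell-1$ in $\mathbb{F}_2^V$. That route needs a separate uniqueness lemma (Lemma~\ref{lemma_1}) characterizing any nontrivial $\mathbb{F}_2$-dependency among these vectors, plus a case split on the parity of $|\cS|$ with a subtraction trick for the even case. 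You avoid both by fixing, in advance, the $(|V|-\ell+1)$-dimensional space $W$ of functions of the form $c+\sum_i h_i(x_i)$, and observing that the restrictions $F_v|_C$ to the clique $C$ are precisely the standard basis vectors of $\mathbb{F}_2^{C}$ (using that distinct vertices of a clique in the xor-product of complete graphs have odd Hamming distance), so linear independence is immediate and there is nothing left to check. This is a tighter Deza--Frankl--Singhi / Frankl--Wilson-style evaluation argument: your evaluation $F_v(w)=1+\ell+|S\cap T|\pmod 2$ is the very scalar-product quantity the paper manipulates inside Lemma~\ref{lemma_1}, and the $\ell$ relations you quotient out to form $W$ correspond exactly to the vectors $\vec{A_1},\dots,\vec{A_\ell}$ that the paper adjoins. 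The net effect is a shorter, uniform, parity-free proof of the same bound, valid for all $\ell\geq 1$.
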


The vertices of $G$ corresponds to $\ell$-element sets $T$ with $|T\cap A_i|=1$ for each $i$.
A clique in $G$ corresponds to an $\ell$-semi-intersecting family $\cS$ of $\ell$-element subsets of $V$, i.e.,
 for distinct $S,T\in \cS$ we have $|S\setminus T|= \ell-|S\cap T|$ is odd, so $(\ell+1+ |S\cap T|)$ is even.
Let $\bbF_2$ be the 2-element field.
For every subset $X\subseteq V$ let $\vec X \in \bbF^V$ denote the characteristic vector $X$. Thus $\vec \emptyset $ is the $|V|$ dimensional zero-vector.
Let $\cA$ denote the family $\{ A_1, \dots, A_\ell \}$.

\begin{Lemma} \label{lemma_1}
Suppose that the cardinality $|\cS|$ is odd and
 suppose that the vectors $\{ \vec{X} : X\in \cS\cup \cA \}$
 have a non-trivial linear dependency, $\sum_{X\in \cS\cup \cA} \alpha (X)\vec{X} = \vec{\emptyset}$ for some
 $\alpha(X) \in \bbF_2$, not all coefficients are zero. Then this dependency is unique and $\alpha(X)=1$ for each $X\in \cS\cup \cA$.
\end{Lemma}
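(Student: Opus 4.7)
The plan is to use the standard symmetric $\bbF_2$-bilinear form $\langle \vec X, \vec Y\rangle = |X\cap Y|\bmod 2$ on $\bbF_2^V$ and extract linear equations in the unknown coefficients $\alpha(X)$ by pairing the dependency $\sum_{X\in\cS\cup\cA}\alpha(X)\vec X = \vec\emptyset$ with each of the vectors $\vec{A_j}$ and each $\vec T$ for $T\in\cS$. The semi-intersecting axioms pin down all the relevant inner products: $|S\cap T|\equiv \ell+1\pmod 2$ for distinct $S,T\in\cS$, $|S|\equiv\ell$, $|S\cap A_j|=1$, $|A_i\cap A_j|=0$ for $i\ne j$, and $|A_j|=n_j$. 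These suffice to solve the resulting system by hand.

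Write $\sigma:=\sum_{S\in\cS}\alpha(S)\in\bbF_2$ and $\tau:=\sum_j\alpha(A_j)\in\bbF_2$. Pairing the dependency with $\vec{A_j}$ yields, for every $j$,
$$\sigma + n_j\,\alpha(A_j) \equiv 0 \pmod 2,$$
while pairing with $\vec T$ for $T\in\cS$ and collapsing $2\equiv 0$ gives
$$\alpha(T) + (\ell+1)\sigma + \tau \equiv 0 \pmod 2.$$
The second identity depends on $T$ only through $\alpha(T)$, so it says that $\alpha|_{\cS}$ is constant on $\cS$, either identically $0$ or identically $1$.

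If $\alpha$ vanishes on $\cS$, then $\sigma=0$ and the original equation collapses to $\sum_j\alpha(A_j)\vec{A_j}=\vec\emptyset$; since the $A_j$ are pairwise disjoint, every $\alpha(A_j)$ must vanish as well, contradicting non-triviality. Hence $\alpha(S)=1$ for all $S\in\cS$, and it is exactly here that the oddness of $|\cS|$ enters, giving $\sigma\equiv 1\pmod 2$. Substituting back into the first equation forces $n_j\alpha(A_j)\equiv 1$, so each $n_j$ must be odd and $\alpha(A_j)=1$ for every $j$. This determines $\alpha$ uniquely as the all-ones vector and proves both claims of the lemma; as a byproduct one sees that a non-trivial dependency can exist only when every $n_j$ is odd. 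The only step demanding any care is the pairing with $\vec T$, which requires clean tracking of the $\ell$-versus-$(\ell+1)$ parities, but this is a one-line $\bbF_2$ computation.
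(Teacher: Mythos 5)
Your proof is correct and follows essentially the same approach as the paper: pair the dependency with $\vec{A_j}$ and with each $\vec T$, use the parity constraint $|S\cap T|\equiv\ell+1\pmod 2$ (and $|T|=\ell$) to conclude that $\alpha$ is constant on $\cS$, and then invoke oddness of $|\cS|$ to force $\alpha(A_j)=1$ for all $j$. The only small difference is that you rule out $\alpha|_\cS\equiv 0$ by observing directly that the disjoint characteristic vectors $\vec{A_j}$ are linearly independent, whereas the paper reaches the same contradiction by pairing with singleton sets.
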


\begin{proof}[Proof of Lemma~\ref{lemma_1}]
The scalar product $\langle \vec{X}, \vec{Y} \rangle = |X\cap Y|$.
So for any $Y\subseteq V$,
$$ 0= \langle \vec{\emptyset}, \vec{Y} \rangle = \sum _{X\in \cS\cup \cA} \alpha (X)|X\cap Y| .
$$
Here every integer is taken modulo 2.
Substituting to $Y$ a single element $v\in A_i$, then a fixed member $A_i\in \cA$, and finally a member $T\in \cS$ we get
\begin{eqnarray}
0 &=& \sum_{S: v\in S\in \cS} \alpha(S) + \alpha(A_i), \label{eq1_v}\\
0 &=& \left( \sum_{S\in \cS} \alpha(S) \right) +\alpha(A_i)|A_i|,\label{eq1_Ai}\\
0 &=& \left( \sum_{S\in \cS} |S\cap T|\alpha(S) \right) +\left( \sum _j \alpha(A_j)\right).  \label{eq1_T}
\end{eqnarray}
Add $(\ell+1)$ times \eqref{eq1_Ai} to \eqref{eq1_T}. For given $T$ and $A_i$ we get
$$
 0 = \left( \sum_{S\in \cS} (\ell+1+ |S\cap T|)\alpha(S) \right)+ (\ell+1)\alpha(A_i)|A_i| +\left( \sum _j \alpha(A_j)\right).
  $$
For distinct $S,T\in \cS$  $(\ell+1+ |S\cap T|)$ is even and for $S=T$ we have $\ell+1+ |S\cap T|=2\ell+1=1$ (in $\bbF_2$).
So the first term in the last displayed formula is exactly $\alpha(T)$. The second and the third terms are independent from $T$, so we obtain that all $\alpha(T)$ are equal.

If $\alpha(T)=0$ for each $T\in \cS$ then \eqref{eq1_v} gives that $\alpha(A_i)=0$ for all $i$, a contradiction.
Therefore each $\alpha(T)=1$, so the first term in \eqref{eq1_Ai} is $|\cS|$. By our assumptions this is odd, so $\alpha(A_i)|A_i|$ should be odd. In particular $\alpha(A_i)=1$ for all $i$.
\end{proof}

\begin{proof}[Proof of Theorem~\ref{th_1_upper}]
If $|\cS|$ is odd, then by Lemma~\ref{lemma_1}  the vectors $\{ \vec{X}: X\in \cS\cup \cA\}$ are either linearly independent in $\bbF_2^V$ or has a unique linear dependency. So they generate a subspace of dimension at least $|\cS|+|\cA|-1$. This is at most $|V|$ and we are done.

If $|\cS|$ is even then we can assume that $|\cS|\geq 2$. Take two distinct members $T_1, T_2\in \cS$.
Then $|\cS\setminus \{ T_i\}|$ is odd (for $i=1,2$). If either of the set of vectors $\{ \vec{X}: X\in (\cS\setminus \{ T_i\})\cup \cA\}$ is independent, we get $|\cS|+|\cA|-1\leq |V|$ as desired.
If both are dependent, then again by Lemma~\ref{lemma_1} they have unique linear dependencies, namely
  $\sum \{ \vec{X}: X\in (\cS\setminus \{ T_i\})\cup \cA\}=\vec{\emptyset}$.
Adding up these equations we get $\vec{T_1}+\vec{T_2}= \vec{\emptyset}$. This contradiction completes the proof.
\end{proof}

\subsection{An explicit construction for the case of complete graphs}\label{sscompletes_lower}

We prove the lower bound  $f_\ell(\NN,1)\geq \ell \NN -2\ell-1$ in Theorem~\ref{th_1} in the following stronger form.
Suppose that $\ell\geq 3$, $A_1, \dots, A_\ell$ are disjoint sets
 of sizes $n_1, \dots, n_\ell$, $V:=A_1 \cup A_2 \cup \ldots \cup A_\ell $.

\begin{Th}\label{th_1_lower}
Let $G$ be the xor-product of the complete graphs $K_{n_1}, \dots, K_{n_\ell}$.
Suppose that $\ell\geq 3$ and $n_i\geq 2$ for each $i\in [\ell]$.
Then $\omega(G)\geq |V|-2\ell-1$.
\end{Th}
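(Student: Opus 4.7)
The plan is to construct explicitly an $\ell$-semi-intersecting family $\mathcal{S} \subseteq A_1 \times \cdots \times A_\ell$ of size at least $|V| - 2\ell - 1$. Vertices of $G$ correspond to $\ell$-tuples $(s_1, \ldots, s_\ell)$, and two distinct tuples $S, T$ are adjacent in $G$ iff their Hamming distance $|\{i : s_i \neq t_i\}|$ is odd, so it suffices to exhibit a family of tuples with pairwise odd Hamming distances.

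I take $\mathcal{S}$ to be a union of $\ell$ axis-parallel stars $L_1, \ldots, L_\ell$, slightly trimmed. Using $n_j \geq 2$, fix distinct elements $v_j, w_j \in A_j$ for each $j$. For each $i$ pick an ``anchor'' $V^{(i)} = (v_1^{(i)}, \ldots, v_\ell^{(i)})$ with each $v_j^{(i)} \in \{v_j, w_j\}$ for $j \neq i$ (the $i$-th coordinate is a placeholder), and set
\[
L_i = \bigl\{ (v_1^{(i)}, \ldots, v_{i-1}^{(i)}, a, v_{i+1}^{(i)}, \ldots, v_\ell^{(i)}) : a \in A_i \bigr\}.
\]
Each $L_i$ has $n_i$ tuples and inner pairwise Hamming distance $1$ (odd). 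A direct computation shows that a tuple of $L_i$ (with free coordinate $a$) and one of $L_j$ (with free coordinate $b$) are at Hamming distance
\[
\mathbf{1}[a \neq v_i^{(j)}] + \mathbf{1}[b \neq v_j^{(i)}] + C_{ij}, \qquad C_{ij} := \bigl|\{k \neq i, j : v_k^{(i)} \neq v_k^{(j)}\}\bigr|.
\]
If the anchors are chosen so that every $C_{ij}$ is odd, then deleting from each $L_i$ the at most two tuples whose $i$-th coordinate lies in the multiset $\{v_i^{(j)} : j \neq i\} \subseteq \{v_i, w_i\}$ turns every cross-pair into one at Hamming distance $1 + 1 + C_{ij}$, which is odd; hence the trimmed $\mathcal{S}$ is a clique.

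Since each $L_i$ loses at most two tuples, the total loss is at most $2\ell$, yielding $|\mathcal{S}| \geq |V| - 2\ell \geq |V| - 2\ell - 1$. It remains to produce anchors satisfying ``$C_{ij}$ odd for all $i \neq j$''. Encoding $V^{(i)}$ by $u^{(i)} \in \mathbb{F}_2^\ell$ with $u^{(i)}_k = 0$ iff $v_k^{(i)} = v_k$ (and convention $u^{(i)}_i = 0$), the condition becomes the parity system
\[
w(u^{(i)}) + w(u^{(j)}) + u^{(i)}_j + u^{(j)}_i \equiv 1 \pmod 2 \qquad (i \neq j),
\]
an underdetermined linear system over $\mathbb{F}_2$ with $\binom{\ell}{2}$ equations. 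For $\ell = 3$ the symmetric triple $V^{(1)} = (\ast, v_2, v_3)$, $V^{(2)} = (w_1, \ast, w_3)$, $V^{(3)} = (v_1, w_2, \ast)$ makes each $C_{ij} = 1$; for $\ell \geq 4$, an analogous construction (based on a fixed-point-free permutation of $\{2, \ldots, \ell\}$ or a suitable tournament on $\{2, \ldots, \ell\}$) satisfies the full system, verifiable by a short case analysis. This combinatorial design step is the main obstacle; once it is settled the loss bound is elementary and the odd-distance property of $\mathcal{S}$ follows directly from the cross-pair distance formula.
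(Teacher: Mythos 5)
Your overall strategy---place $\ell$ axis-parallel ``stars'' $L_1,\dots,L_\ell$ around anchors, trim each star by at most two tuples, and arrange cross-pair Hamming distances to be odd---is essentially the paper's core construction (the paper's core sets $B_i$ are exactly your anchors viewed as $(\ell-1)$-subsets, and its condition $|B_i\cap B_j|+\ell$ odd is equivalent to your ``$C_{ij}$ odd''). However, there is a genuine gap in the step you call ``verifiable by a short case analysis'': the parity system
\[
w(u^{(i)})+w(u^{(j)})+u^{(i)}_j+u^{(j)}_i\equiv 1\pmod 2 \qquad (i\ne j)
\]
is \emph{infeasible} whenever $\ell\equiv 2\pmod 4$. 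Indeed, summing the $\binom{\ell}{2}$ equations over all unordered pairs $\{i,j\}$ gives on the left-hand side
\[
(\ell-1)\sum_i w(u^{(i)})+\sum_{i\ne j}u^{(i)}_j=(\ell-1)\sum_i w(u^{(i)})+\sum_i w(u^{(i)})=\ell\sum_i w(u^{(i)}),
\]
which is $\equiv 0\pmod 2$ for $\ell$ even, while the right-hand side is $\binom{\ell}{2}$, which is odd when $\ell\equiv 2\pmod 4$. So for $\ell=6,10,14,\dots$ no choice of anchors with $v^{(i)}_j\in\{v_j,w_j\}$ can make every $C_{ij}$ odd; the tournament heuristic also fails in exactly these cases since a tournament on $\ell$ vertices with all out-degrees of equal parity would force $\binom{\ell}{2}$ even. (For $\ell$ odd and for $\ell\equiv 0\pmod 4$ the system is solvable, e.g.\ via a regular tournament, so your argument does go through there.)

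The paper sidesteps precisely this obstruction: it does not insist that every class of the core have size two. For $\ell\equiv 2\pmod 4$ it builds, via the fusion lemma, an $\ell$-core of type $(2,2,3,2,\dots,2)$, i.e.\ one class of size three and the rest of size two, which has $|U|=2\ell+1$. This requires a third point in one base set, which is available because one may assume $\max_i n_i\ge 3$ (otherwise $|V|=2\ell$ and the bound $|V|-2\ell-1=-1$ is trivial). Your template has no room for a class of size three since you restricted all anchor coordinates to $\{v_j,w_j\}$; adding this flexibility, together with the trivial-case reduction, would repair the argument and recover the paper's bound.
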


\begin{proof}
We show a construction.
For a given partition $A_1, \dots, A_\ell$ ($\ell\geq 3$) we call the family of sets $\cB:=\{B_1, \dots, B_\ell\}$ an $\ell$-{\em core} if
\begin{itemize}
    \item[(i)] each $B_i$ is an $(\ell-1)$-set with $B_i\cap A_i=\emptyset$ but $|B_i\cap A_j|=1$ for $i\neq j$ and
    \item[(ii)] $|B_i\cap B_j|\not \equiv \ell \pmod 2$ for all $1\leq i,j\leq \ell$.
\end{itemize}
This intersection condition can be reformulated as $|B_i\cap B_j|+\ell$ is always odd.
Let $U(\cB)$ denote $\cup B_i$.
A core $\cB$ generates an $\ell$-uniform family $\cS(\cB)$ by enlarging each core set by extra elements as follows.
$$
   \cS(\cB):= \{ B_i\cup \{ x\}: i\in [\ell], \, x\in A_i\setminus U\}.
  $$
We claim that $\cS(\cB)$ is an $\ell$-semi-intersecting family with $k=1$ of size $|V|-|U|$.
Indeed, by definition each $S \in \cS(\cB)$ intersects every $A_i$ in exactly one element.
Let $S, T \in \cS(\cB)$ with $S \neq T$.
Say $S=B_i\cup \{ x\}$ and $T=B_j\cup \{y\}$.
Then $S\cap T= B_i\cap B_j$.
So $S\cap T\cap A_\alpha=\emptyset$ in $\ell-|B_i\cap B_j|$ cases of $\alpha$.
Here $\ell-|B_i\cap B_j|$ is odd by (ii), so $\cS(\cB)$ is an $\ell$-semi-intersecting family.

The next step in the proof of Theorem~\ref{th_1_lower} is to find a core $\cB$ with small $|U(\cB)|$.
We need a couple of more definitions.
The {\em type} of $\cB$ is the multiset $\{ |U\cap A_i|: i\in [\ell] \}$.
The set $A_i\cap U$ is called the $i$th {\em class} of $\cB$.

\begin{Lemma}\label{le:fusion}
Suppose that $p,q\geq 3$, $\cB_p'$
 is a $p$-core of type $(x_1, \dots, x_p)$ and $\cB_q''$
 is a $q$-core of type $(y_1, \dots, y_q)$. Then there exists a $(p+q-1)$-core $\cB$
 of type $(x_1, \dots, x_{p-1}, x_p+y_1, y_2, \dots, y_q)$.
\end{Lemma}

\begin{proof}[Proof of Lemma~\ref{le:fusion}]
Suppose that $U(\cB_p')$ and  $U(\cB_q'')$ are disjoint.
We have  $|\cB_p'|=p$, $|\cB_q''|=q$.
The classes of $\cB_p'$ are denoted by
 $A_1', \dots, A_p'$ ($|A_i'|=x_i$), its hyperedges are $B_1', \dots, B_p'$.
The classes of $\cB_q''$ are denoted by
 $A_1'', \dots, A_q''$ ($|A_j''|=y_j$), its hyperedges are $B_1'', \dots, B_q''$.
We define the core $\cB=\cB_{p+q-1}$ as follows.
Its classes are $A_1, \dots, A_{p+q-1}$ where $A_i:= A_i'$ for $1\leq i\leq p-1$, $A_p:=A_p'\cup A_1''$, and
 $A_j:= A_{j-p+1}''$ for $p+1\leq j \leq p+q-1$.
The hyperedges $B_1, \dots, B_{p+q-1}$ of $\cB_{p+q-1}$ are defined as unions of the form $B_\alpha'\cup B_\beta''$ as follows:
$B_p:= B_p'\cup B_1''$ and in general $B_i:= B_i'\cup B_1''$ for $1 \leq i\leq p$ and $B_j:= B_p'\cup B_{j-p+1}''$ for $p+1\leq j\leq p+q-1$.

We claim that $\cB$ is a $(p+q-1)$-core.
 $B_\alpha\cap A_\alpha=\emptyset$ and $|B_\alpha\cap A_\beta|=1$ for $\alpha\neq \beta$ follows from the definition of $\cB$.
Consider $|B_\alpha \cap B_\beta|$. We have to show that $|B_\alpha \cap B_\beta| +(p+q-1)$ is odd.
Write $B_\alpha$ in the form $B_e'\cup B_f''$ and let $B_\beta=B_g'\cup B_h''$ where $B_e', B_g'\in \cB'$ and $B_f'', B_h''\in \cB''$.
We have $B_\alpha \cap B_\beta= (B_e'\cup B_f'') \cap (B_g'\cup B_h'')$ which is the disjoint union of
$B_e'\cap B_g'$ and $B_f''\cap B_h''$.
Since $|B_e'\cap B_g'|+p$ and $|B_f''\cap B_h''|+q$ are both odd their sum is even, so
$$
 |B_\alpha \cap B_\beta|+p+q-1= |B_e'\cap B_g'|+|B_f''\cap B_h''|+p+q-1$$
   is odd.
So $\cB$ is a $(p+q-1)$-core, completing the proof of~Lemma~\ref{le:fusion}.
\end{proof}

The procedure described in the proof of Lemma~\ref{le:fusion}
will be referred to as the {\em fusion} of $\cB_p$ and $\cB_q$. Using this construction, we prove by induction that there exists an $\ell$-core $\cB$ with $|U(\cB)| \leq 2\ell+1$ if $\max_{i} n_i\geq 3$. Note that we can assume that $\max_{i} n_i\geq 3$ as if each $n_i=2$ then $|V|= 2\ell$, so the lower bound from Theorem~\ref{th_1_lower} obviously holds.
First, we define an $\ell$-core for $\ell=3,4,5$.

There is a $3$-core $\cB_3$ of type $(2,2,2)$ with three sets $B_\alpha:=\{ a_{\alpha-1, \alpha}, a_{\alpha+1, \alpha}\}$ (indices are taken modulo 3) where these $a$'s are six distinct vertices with $a_{i,j}\in A_i$ ($i,j\in \{ 1,2,3\}$, $i\neq j$).

There is a $4$-core $\cB_4$ of type $(2,2,2,1)$ on $7$ vertices $\{ a_{1,2}, a_{1,3}, a_{2,1}, a_{2,3}, a_{3,1}, a_{3,2}, a_4\}$ where
  $\{ a_{1,2}, a_{1,3}\}\subset A_1$,  $\{ a_{2,1}, a_{2,3}\}\subset A_2$, $\{a_{3,1}, a_{3,2}\}\subset A_3$, and $a_4\in A_4$.
The core sets are  $B_1:=\{ a_{2,1}, a_{3,1}, a_4\}$, $B_2:=\{ a_{1,2}, a_{3,2}, a_4\}$, $B_3:=\{ a_{1,3}, a_{2,3}, a_4\}$, and $B_4:=\{ a_{1,3}, a_{2,1}, a_{3,2}\}$.

There is a $5$-core $\cB_5$ of type $(2,2,2,1,1)$ on $8$ vertices $\{ a_{1,2}, a_{1,3}, a_{2,1}, a_{2,3}, a_{3,1}, a_{3,2}, a_4, a_5\}$ where
  $\{ a_{1,2}, a_{1,3}\}\subset A_1$,  $\{ a_{2,1}, a_{2,3}\}\subset A_2$, $\{a_{3,1}, a_{3,2}\}\subset A_3$, $a_4\in A_4$, and $a_5\in A_5$.
The core sets are  $B_1:=\{ a_{2,1}, a_{3,1}, a_4, a_5\}$, $B_2:=\{ a_{1,2}, a_{3,2}, a_4, a_5\}$, $B_3:=\{ a_{1,3}, a_{2,3}, a_4, a_5\}$, $B_4:=\{ a_{1,3}, a_{2,1}, a_{3,2}, a_5\}$, and $B_5:=\{ a_{1,2}, a_{2,3}, a_{3,1}, a_4\}$.

Now we give the construction for any $\ell \geq 3$. Fusing $m$ copies of $\cB_5$ of type $(1,2,2,2,1)$ we obtain a $(4m+1)$-core $\cB_{4m+1}$ of type $(1, 2, \dots, 2,1)$ for each $m\geq 1$.
The fusion of a $\cB_4$ of type $(2,2,2,1)$ and a $\cB_{4m+1}$ defines a $(4m+4)$-core
  $\cB_{4m+4}$ of type $(2,2\dots, 2, 1)$ for each $m\geq 0$.
Fusing this with a $\cB_4$ of type $(1,2,2,2)$ yields a $(4m+7)$-core of type $(2,\dots, 2)$ (for each $m\geq 0$).
Finally, fusing $\cB_3$ of type $(2,2,2)$ and a  ${4m+4}$-core of type $(1,2, \dots, 2)$ ($m\geq 0$) one gets a
 $(4m+6)$-core of type $(2,2,3,2,\dots, 2)$, which finishes the proof.
\end{proof}

\subsection{Remarks}\label{ss_ell=1_remarks}

Note that in the proof of Theorem~\ref{th_1_upper} we have adapted a method of Deza, Frankl and Singhi~\cite{deza1983functions}.
Their `even town theorem' can be applied to prove $f_\ell (\NN,1) \leq \ell \NN-\ell+1$ when $\ell $ and $\NN$ are both even.
If $\ell $ is even and $\NN$ is odd one can still apply the even town theorem to get $\ell \NN+1$, the same upper bound  as in Alon and Lubetzky~\cite{xorproduct}.
In the case $\ell $ is odd the bound $f_\ell (\NN,1) \leq \ell \NN$ follows by a theorem of Frankl and Wilson~\cite{frankl1981intersection}.

Suppose that there exists a finite projective plane of order $\ell-1$, i.e., an $\ell$-uniform, $\ell$-regular set system $\cL$ of size $\ell^2-\ell+1$ such that $|L\cap L'|=1$ for all pairwise intersections. Also suppose that $\ell$ is even. Take any vertex $v$, we have $\ell$ lines containing it, $L_1, \dots,  L_{\ell}$.
Set  $A_i:=L_i\setminus \{ v\}$, and let $\cS:= \cP \setminus \{ L_1, \dots, L_\ell\}$.
This $\cS$ is an $\ell$-semi-intersecting family of size $(\ell-1)^2$ on $\ell \times (\ell-1)$ vertices.
Since such finite planes exist whenever $\ell-1$ is a power of an odd prime, we got
 infinitely many cases when the lower bound is tight in Theorem~\ref{th_1_upper}. This example was also mentioned by Alon and Lubetzky~\cite{xorproduct, AL2}. They were more interested from coding theory point of view, i.e., when $n$ is fixed and $\ell\to \infty$.

Finding the exact value of $f_\ell (\NN,1)$ is still open.

\section{Higher powers, the general case} \label{sec_generalpowers}

In this section we study the order of magnitude of $f_\ell(\NN,k)$.

\subsection{Proof of the upper bound~\eqref{eq13} by induction on $\ell$}\label{ss41}

Suppose that $\ell\geq 2$ and let $\cS$ be
an $\ell$-semi-intersecting family with parameters $\NN$ and $k$ and base sets $A_1, \dots, A_\ell$. Take any $v \in A_i$. Define
    $\m{S}[v]:=\{S\setminus A_i:  v\in S\in \m{S} \}$.
Then $\m{S}[v]$ does not contain multiple hyperedges, it is an $(\ell-1)$-semi-intersecting family.
Hence $|\m{S}[v]|= \deg_{\cS}(v) \leq f_{\ell-1}(\NN,k)$.
Take this inequality for each $v\in A_i$ and suppose that $|\cS|$ has maximum size. We get
\begin{equation}\label{eq411}
  f_\ell(\NN,k)=|\m{S}| = \frac{1}{k} \sum_{v\in A_i} \deg_{\m{S}}(v)   \leq   \frac{\NN}{k}  f_{\ell-1}(\NN,k).
\end{equation}

If $\ell$ is even then $\cS$ is intersecting. Taking the degrees of any given $T\in \m{S}$ we obtain
\begin{equation}\label{eq412}
  f_\ell(\NN,k)=|\m{S}| \leq 1 +\sum_{v\in T}(\deg_\m{S}(v)-1) \leq  1+k\ell\left(f_{\ell-1}(\NN,k)-1\right)\leq k\ell f_{\ell-1}(\NN,k).
\end{equation}

We have $f_1(\NN,k)\leq \NN/k$.
Apply~\eqref{eq412}, we get $ f_2(\NN,k) \leq 2k f_{1}(\NN,k)\leq 2\NN $.
Then~\eqref{eq411} gives $ f_3(\NN,k) \leq (\NN/k) f_{2}(\NN,k)\leq 2\NN^2/k $.
Apply again~\eqref{eq412}, we get $ f_4(\NN,k) \leq 4k f_{3}(\NN,k)\leq 2\cdot 4 \cdot \NN^2$.
Continuing this way, we get for each even $\ell$ that $ f_\ell(\NN,k) \leq  2\cdot4\cdot \dots \cdot \ell \cdot \NN^{\ell/2}$ and
 $ f_\ell(\NN,k) \leq  2\cdot4\cdot \dots \cdot (\ell-1) \cdot \NN^{(\ell+1)/2}/k$
when  $\ell$ is odd.
\qed

\subsection{Construction showing the lower bound~\eqref{eq14}}\label{ss42}

 Note that $f_\ell(\NN,k)$ is monotonous in $\NN$ and also increases monotonously in $\ell$, since an $\ell$-semi-intersecting family $\m{S}$ can be extended to an $(\ell+1)$-semi-intersecting family by adding $A_{\ell+1}$ to the base sets and a fixed $k$-element $S_{\ell+1} \subset A_{\ell+1}$ to all $S \in \m{S}$.
So it is enough to prove the theorem for $\ell=2^t-1$ where $t\geq 2$ is an integer, and we also suppose that $k \geq t$.
Let $m:=\left\lfloor \frac{\NN}{k} \right\rfloor$.

Take $\ell$ disjoint sets $A_1, \dots, A_\ell$ of sizes $|A_\alpha|=mk$.
We are going to define an $\ell$-semi-intersecting family $\m{S}$ of size $m^t$ with parameters $mk$ and $k$ with these base sets.
Let $H$ be $0$-$1$ matrix of size $\ell \times t$ with $2^t-1$ pairwise distinct nonzero rows. Note that this matrix is unique up to a permutation of its rows.
Let $C$ be an  $\ell \times t$ matrix with non-negative integer entries such that $C_{\alpha,\beta}=0$ if and only if $H_{\alpha,\beta}=0$
 and the row sums are $k$, i.e., $\sum_{1\leq \beta\leq t} C_{\alpha,\beta}=k$. This is possible, as $k \geq t$.
Partition each $A_\alpha$ into subsets $A_{\alpha,\beta}^p$ where $1\leq p\leq m$ and $|A_{\alpha,\beta}^p|= C_{\alpha,\beta}$. In particular, let $A_{\alpha,\beta}^p=\emptyset$
if $C_{\alpha,\beta}=0$.
Make another partition of $\cup A_\alpha$ into $mt$ sets by joining some of the $A_{\alpha,\beta}^p$ as follows. For each $\beta$  and $p$ where $1\leq \beta \leq t$ and $1\leq p\leq m$ define
$$ S_\beta^p:= \bigcup_{1\leq \alpha \leq \ell}  A_{\alpha,\beta}^p.
$$
For each of the $m^t$ functions $\varphi: [t]\to [m]$  define
    $S_{\varphi}:=\cup_{1\leq \beta\leq t}  S_\beta^{\varphi(\beta)}$. Finally, let $\m{S}:=\{S_{\varphi}: \varphi: [t]\to [m] \}$.

Let us prove that $\m{S}$ is an $\ell$-semi-intersecting family.
Each $\S_\varphi\in \m{S}$ intersects every $A_\alpha$ in $k$ elements, as
    $$
    |S_\varphi \cap A_\alpha|=\left|\bigcup_{1\leq \beta\leq t} (A_\alpha \cap S_\beta^{\varphi(\beta)})\right|=
        \left| \bigcup_{1\leq \beta\leq t}  A_{\alpha,\beta}^{\varphi(\beta)} \right| = \sum_{1\leq \beta\leq t}   C_{\alpha,\beta}=k.
        $$

Let $D_\beta \subset [\ell]$ denote the set of indices of nonzero elements of the column $\beta$ of $H$, i.e.,
  $D_\beta:= \{ \alpha: H_{\alpha,\beta}=1\}$.
For any set $X\subset \cup A_\alpha$ let $\pi(X)\subset [\ell]$ denote its projection,  $\pi(X):=\{ \alpha\in  [\ell]:  A_\alpha\cap X\neq \emptyset\}$.
We have $\pi(S_\beta^p)=D_\beta$ for all $p$. Even more, each such set has the type $(C_{1, \beta}, \dots, C_{\ell, \beta})$, i.e., $|S_\beta^p\cap A_\alpha|$ is exactly $C_{\alpha,\beta}$.
Note that for any $Q\subseteq [t]$ we have $|\cup_{q\in Q} D_q|= 2^t-2^{t-q}$, an even number except in the case $Q=[t]$.

    Given two functions $\varphi,\sigma$ we claim that $|\pi(S_\varphi \cap S_\sigma)|$ is even except in the case $\varphi=\sigma$. This implies that
$\m{S}$ is $\ell$-semi-intersecting, as claimed.
We have
$S_\varphi \cap S_\sigma= \left( \cup S_\beta^{\varphi(\beta)} \right) \cap \left(  \cup S_\beta^{\sigma(\beta)} \right)$.
Since the sets $S_\beta^{p}$ form a partition of $\cup A_\alpha$ we have that $S_\varphi \cap S_\sigma= \cup \{ S_\beta^{p}:
  \varphi(\beta)= \sigma(\beta)\} $.
Hence $\pi(S_\varphi \cap S_\sigma)= \cup \{ D_\beta:
  \varphi(\beta)= \sigma(\beta)\}$. This set has even cardinality whenever $\varphi \neq \sigma$.  So
   we find that $S_\varphi$ and $S_\sigma$ are disjoint in an odd number of base sets $A_\alpha$ which finishes the proof.
\qed

\bibliography{biblio25}
\bibliographystyle{plain}

\bigskip

\ \\
\noindent
{\bf Zolt\'an F\"uredi}\\
Alfr\'ed R\'enyi Institute for Mathematics \\
Re\'altanoda u. 13--15, H-1364 Budapest, Hungary\\
\texttt{furedi\svgperiod zoltan[at]renyi\svgperiod hu}
\medskip

\ \\
\noindent
{\bf András Imolay}\\
E\"otv\"os Lor\'and University \\
H-1117 Budapest, Pázmány Péter sétány 1/C \\
\texttt{andras\svgperiod imolay[at]ttk\svgperiod elte\svgperiod hu}
\medskip

\ \\
{\bf Ádám Schweitzer}\\
KTH Royal Institute of Technology\\
Kungliga Tekniska högskolan, 100 44 Stockholm\\
\texttt{adasch[at]kth\svgperiod se}

\end{document}